\theoremstyle{plain}
\newtheorem{theorem}{\bf Theorem}[section]
\newtheorem{lemma}[theorem]{\bf Lemma}
\newtheorem{cor}[theorem]{\bf Corollary}
\newtheorem{problem}[theorem]{\bf Problem}
\newtheorem{prop}[theorem]{\bf Proposition}
\newtheorem{conj}[theorem]{\bf Conjecture}
\newtheorem{nota}[theorem]{\bf Notation}
\newtheorem{remark}[theorem]{\bf Remark}
\newtheorem{defi}[theorem]{\bf Definition}
\newtheorem{obs}[theorem]{\bf Observation}
\def\exo{\hbox{\rm ex}_\mathcal{OP}}
\def\expo{\hbox{\rm ex}_\mathcal{P}}
\newcommand\cC{{\mathcal C}}
\newcommand\cN{{\mathcal N}}
\theoremstyle{plain}
\newtheorem{Th}{Theorem}[section]
\newtheorem{Lemma}[Th]{Lemma}
\newtheorem{Cor}[Th]{Corollary}
\theoremstyle{definition}
\newtheorem{?}[Th]{Problem}
\title{
\ Generalized Outerplanar Turán numbers and maximum number of  $k$-vertex subtrees}
\author{Dávid Matolcsi\thanks{ E\"otv\"os Lor\'and University, Budapest, Hungary. The author is supported by the  ÚNKP-20-1 New National Excellence Program of the
Ministry of Human Capacities. E-mail: {\tt matolcsidavid@gmail.com}}, 
Zoltán Lóránt Nagy\thanks{ELKH--ELTE Geometric and Algebraic Combinatorics Research Group,
  E\"otv\"os Lor\'and University, Budapest, Hungary. The author is supported by the Hungarian Research Grant (NKFI) No. K 120154, 124950, 134953 and by the János Bolyai Scholarship of the Hungarian Academy of Sciences. 	E-mail: {\tt nagyzoli@caesar.elte.hu}} }
\date{}
\begin{document}

\maketitle

\begin{abstract}

We prove an asymptotic result on the maximum number of $k$-vertex subtrees in binary trees of given order. This problem turns out to be equivalent to determine the maximum number of $k+2$-cycles in  $n$-vertex outerplanar graphs, thus we settle the generalized outerplanar Turán number for all cycles.\\
We also determine the exponential growth of the generalized outerplanar Turán number of paths $P_k$ as a function of $k$ which implies the order of magnitude of the generalized  outerplanar Turán number of arbitrary trees.
The bounds are strongly related to the sequence of Catalan numbers.
\end{abstract}

\section{Introduction}

In a generalized Tur\'an problem, we are given graphs $H$ and $F$ and seek to maximize the
number of copies of $H$ in  $F$-free graphs of order $n$. The restriction on the forbidden subgraph may also concern a whole family of graphs $F\in \mathcal{F}$.
We consider generalized Tur\'an problems where the host graph is outerplanar. Generalized Turán problems were investigated systematically by Alon and Shikhelman \cite{AlonS}, while by Gy\H{o}ri, Paulos, Salia, Tompkins and Zamora  studied the version where the host graph is planar \cite{Gyori}.  They introduced the notation $\expo(n, H)$ for the maximum number of subgraphs $H$ in simple planar graphs on $n$ vertices. Analogously, we introduce the following notation.

\begin{defi}\label{maindef}
$\exo(n, H)$ denotes the maximum number of subgraphs $H$ in simple outerplanar graphs on $n$ vertices.
\end{defi}

 Chartrand and Harary \cite{Char} proved an analogue of Kuratowski's theorem for outerplanar graphs. They characterised outerplanar graphs as those graphs which do not contain a subdivision of $K_4$ or $K_{2,3}$ as a subgraph. Thus  Definition \ref{maindef} is meaningful only for graphs $F$ which does not contain a subdivision of $K_4$ or a $K_{2,3}$, moreover this characterisation implies that $\exo(n, H)$ also refers to a generalized Turán problem.


Problems of this favour, namely to consider a some general family as the host graph and maximise the number of subgraphs of given type has a long history, and it   attained significant interest recently.

Hakimi and Schmeichel \cite{Hakimi} determined the order of magnitude of the planar Turán numbers for  cycles.
Recently, Győri et. al. obtained exact results on planar Turán numbers for short cycles \cite{c4, c5} via the combination of several refined methods. Later, Cox and Martin \cite{Martin} proved asymptotic results for further instances of short cycles and paths. However,  general asymptotic results for long cycles  or paths seems unknown. In fact, the following problem of Győri et. al. \cite{Gyori} (based  on Eppstein's questions \cite{Eppstein}) was also unsolved concerning the 
exponent of the planar or outerplanar Turán number in general, until  recently. 

\begin{problem}
Is it  true that for all $H$, the order of magnitude of the maximum number of copies of $H$
possible in a minor-closed family of $n$-vertex graphs is an integer power of $n$?
\end{problem}

This has been answered affirmatively  very recently for several minor closed classes and graphs $H$ due to  Huynh, Joret and Wood  \cite{Huynh, Huynh2}. Note that
Eppstein \cite{Eppstein} proved  $\exo(n, H)=O(n)$ if and only if $H$ is $2$-connected, moreover for any $2$-connected $H$ there is a linear-time algorithm which enumerates the occurrences of $F$ in any outerplanar graph. (A related result for planar graphs was proved independently by Wormald \cite{Wormald}.)  

Studying outerplanar graphs may be of independent interest as well, since these graphs play an important role in several central problems, like the art gallery problems \cite{Fisk}. Besides, it can serve as a base of comparison for the planar case, or show relations to Hamiltonicity as a Hamiltonian planar triangulation can be obtained from gluing together two maximal outerplanar graphs along the edges of the outer face \cite{Nagy}. Furthermore it may lead to an extension of the Turán-problems in various ways. Either one considers host graphs of bounded Colin de Verdière number $\mu$ (note that $\mu(G)\leq 3$ characterises planar and   $\mu(G)\leq 2$ characterises outerplanar graphs \cite{Colin}), or further minor-closed families.

The first aim of the present paper is to give sharp or asymptotically sharp bounds for certain families of graphs $H$ and describe the extremal graphs as well.
In this direction, we prove our first main result concerning the cycle case $H=C_k$. In contrast with the  planar setting, we can determine the asymptotic result for the maximum number cycle $C_k$ for every value of $k$. 

\begin{theorem}\label{asym}
$$\exo(n, C_k)=(c(k)+o(1))n,$$ where the constant $c(k)$ is determined by the sum $$c(k):=\sum_{r=1}^{k} \frac{t(k,r)}{r}.$$ Here the function $t(k,r)$
is gained from the recursion  $$t(k,r)=\sum_{s=1}^k t(k-r,s)\binom{2s-1}{r-1}$$ with 
 $t(0,r)=0$ $\forall r$,  $t(1,1)=1$, $t(1,r)=0$ if $r\ne 1$, and  $t(k,r)=0$ if $r>k$.
 \end{theorem}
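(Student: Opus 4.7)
My plan consists of three stages: reducing to triangulated polygons, translating cycles into dual subtrees, and counting subtrees via a rooted decomposition. For the reduction, observe that given any outerplanar graph $G$, embedding it with all vertices on the outer face and adding chords to triangulate each inner face yields a maximal outerplanar graph with at least as many copies of $C_k$ as $G$. It therefore suffices to bound $\exo(n,C_k)$ over triangulations $\Delta$ of a convex $n$-gon. The weak dual $T$ of $\Delta$---vertices are the $n-2$ triangular faces, edges between triangles sharing a diagonal---is a tree of maximum degree three, i.e.\ a binary tree.

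The dictionary between cycles and subtrees is then immediate. A simple cycle of length $k$ in $\Delta$ partitions the triangles into interior and exterior parts, and the interior triangles form a connected region and hence a subtree $S$ of $T$ of size $k-2$ (since a triangulated $k$-gon has $k-2$ triangles). Conversely, the union of the triangles of any subtree of $T$ is bounded by a simple cycle of length $|V(S)|+2$. Consequently,
$$\exo(n,C_k) \;=\; \max_T \; \#\{\,S\subseteq T : S\text{ connected},\ |V(S)|=k-2\,\},$$
over binary trees $T$ on $n-2$ vertices.

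The third stage is the asymptotic enumeration via a rooted decomposition. Fix a vertex $v$ of $T$ and, for each subtree $S\ni v$, record a root statistic $r$ measuring the local branching at $v$ (e.g.\ how many of $v$'s neighbors lie in $S$, refined by the plane-embedding data). In the reference infinite cubic binary tree, let $t(m,r)$ count rooted subtrees of size $m$ with root statistic $r$. The recursion $t(m,r)=\sum_{s=1}^{m} t(m-r,s)\binom{2s-1}{r-1}$ should arise by peeling off the root together with its $r$ selected incident edges, leaving a smaller subtree of size $m-r$ with statistic $s$, and with the factor $\binom{2s-1}{r-1}$ encoding how the remaining $r-1$ attachment slots distribute among the $2s-1$ canonical insertion positions of the reduced subtree---a Catalan-type count, as foreshadowed by the abstract. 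Summing over all vertices of $T$ and dividing by $r$ (since each subtree is anchored at $r$ directed edges emanating from its root) yields the per-vertex density $\sum_r t(m,r)/r$, matching the formula for $c(k)$ after the appropriate index alignment ($m=k-2$). For the lower bound I would exhibit a near-cubic family $T_n$---for example, a large balanced full binary tree with an appended backbone---in which almost every vertex realizes the full root-statistic profile; the $O_k(1)$ vertices near leaves contribute only $o(n)$.

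The main obstacle, in my view, is the per-vertex upper bound: proving that the combinatorial quantity $t(m,r)$ genuinely dominates the local subtree count at every vertex of every binary tree $T$, so that no asymmetric branching structure can outperform the cubic configuration. I expect an induction on $m$ along the recursion above, arguing that the product of the children's subtree counts under the degree-three constraint is maximized when each child itself realizes full branching; the Catalan-type identity implicit in $\binom{2s-1}{r-1}$ should make the inductive step transparent. Once this tight per-vertex bound is established, the asymptotic formula follows by direct summation, and the construction described above furnishes the matching lower bound.
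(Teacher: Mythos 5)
Your first two stages coincide with the paper's: completing to a maximal outerplanar graph, passing to the weak dual, and the bijection between $k$-cycles and $(k-2)$-vertex subtrees of a binary tree on $n-2$ vertices is exactly how Theorem~\ref{main} is set up. The divergence, and the gap, is in stage three. The step you yourself single out as ``the main obstacle'' --- that no binary tree on $n-2$ vertices has more $(k-2)$-vertex subtrees than the balanced one --- is the real mathematical content of the reduction, and your plan for it (an induction on $m$ claiming that the product of the children's subtree counts is maximized under full branching) is not a proof and is unlikely to become one by that route: per-vertex subtree counts do not factor over children in a way such an exchange argument controls. This extremality statement is precisely the theorem of Andriantiana, Wagner and Wang that the paper invokes (greedy/BFS trees simultaneously maximize the number of $k$-vertex subtrees for every $k$ among trees of bounded maximum degree). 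Without citing that result or genuinely proving it, your argument yields only a lower bound on $\exo(n,C_k)$, not the asymptotic equality.

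Separately, your reading of the recursion does not actually produce it. In the paper, $t(k,r)$ is defined on the complete leveled binary tree: it counts $k$-vertex subtrees through a fixed vertex $v$ on the lowest level $h$ that have exactly $r$ vertices on that level. The recursion comes from deleting the entire bottom level (all $r$ vertices), leaving a $(k-r)$-vertex subtree through the parent of $v$ with $s$ vertices on level $h-1$; the factor $\binom{2s-1}{r-1}$ counts the choices of the other $r-1$ bottom-level vertices among the $2s-1$ remaining children of those $s$ vertices, and the division by $r$ corrects for each subtree being counted once per bottom-level vertex. Your proposed root statistic (``how many of $v$'s neighbours lie in $S$'') is at most $3$ in a cubic tree, whereas $r$ ranges up to $k$, so peeling the root cannot give the stated recursion. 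This part is repairable by adopting the level-peeling definition, but as written the decomposition does not match the formula you are trying to derive.
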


In fact, we prove a more general 
result by describing a family of extremal graphs via applying the results of Andriantiana, Wagner
and Wang \cite{k-tree}, since the problem turns out to be equivalent to
maximize the number of  $k$-vertex subtrees in bounded degree trees.
However we also point out that there is no unique extremal structure in general.

The link between the Turán-type problem and the subgraph enumeration leads to the following equivalent statement, which completes the description of Andriantiana, Wagner
and Wang.

\begin{theorem}
The maximum number of subtrees of size $k-2>0$ in binary trees on $n$ vertices is $(c(k)+o(1))n$, where $c(k)$ is defined as in Theorem \ref{asym}.
\end{theorem}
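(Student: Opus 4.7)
The plan is to derive this statement as a direct corollary of Theorem \ref{asym} through the classical weak-dual correspondence between maximal outerplanar graphs and binary trees. Given a maximal outerplanar graph $G$ on $m$ vertices, i.e.\ a triangulation of an $m$-gon, its weak dual $T_G$ has as vertices the $m-2$ inner (triangular) faces of $G$, with an edge between every pair of triangles sharing an edge of $G$. Since each triangle is incident to at most three others, $T_G$ is a tree on $m-2$ vertices of maximum degree at most $3$, i.e.\ a binary tree; conversely, every binary tree on $n$ vertices arises as $T_G$ for some triangulated $(n+2)$-gon.

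Next I would verify the substructure bijection: cycles of length $k$ in $G$ correspond bijectively to subtrees of $T_G$ on $k-2$ vertices. Given a $k$-cycle $C \subseteq G$, the Jordan curve theorem shows that $C$ encloses a set $F(C)$ of inner triangles, which is face-connected and hence induces a subtree $S(C) \subseteq T_G$. A short count pins down its size: letting $j$ be the number of enclosed triangles and $i$ the number of internal edges in the bounded region, edge-face incidences give $3j = 2i + k$, while Euler's formula $V - E + F = 2$ applied to the sub-polygon gives $k - (k+i) + (j+1) = 2$, so $j = k-2$. Conversely, the union of any subtree of $T_G$ on $k-2$ vertices is a topological disk whose boundary is a cycle of length $k$ in $G$.

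The remaining observations are routine. Every outerplanar graph on $m$ vertices is a subgraph of some maximal outerplanar graph on $m$ vertices (add chords along an outerplanar embedding to form a Hamiltonian polygon, then triangulate), so $\exo(m, C_k)$ is attained by a maximal outerplanar graph. Hence, letting $M(n)$ denote the maximum number of subtrees of size $k-2$ in an $n$-vertex binary tree, the previous two paragraphs give $M(n) = \exo(n+2, C_k)$, and applying Theorem \ref{asym} yields $M(n) = (c(k)+o(1))(n+2) = (c(k)+o(1))n$.

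The only delicate point in the plan is the substructure bijection above, but this is essentially folklore for triangulated polygons; a secondary subtlety, to be checked against the exact definition used in the statement, is whether \textit{binary tree} should be understood as an unrooted tree of maximum degree $3$ (matching the dual) or in some rooted variant, in which case a canonical rooting (say at a leaf corresponding to an ear of the triangulation) resolves the discrepancy without affecting the asymptotic count.
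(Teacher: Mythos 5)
Your proposal is correct and takes essentially the same route as the paper: the weak-dual equivalence between $k$-cycles in maximal outerplanar graphs and $(k-2)$-vertex subtrees of binary trees is exactly the content of Theorem \ref{main}, and the paper presents this theorem as the ``equivalent statement'' obtained from Theorem \ref{asym} through that correspondence. Your Euler-formula computation showing that a $k$-cycle encloses exactly $k-2$ triangles is a worthwhile explicit verification of a step the paper only asserts (and in fact misstates as ``$k-3$ triangular faces'').
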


\begin{table}[h!]
\centering
\begin{tabular}{l|r|r| r|r|r|r|r|r|r|r}
\quad \quad  $k$ & $3$ & $4$ & $5$ & $6$ & $7$ & $8$ & $9$ & $10$ & $11$ & $12$\\\hline
\shortstack{value of $c(k)$ } & $1$ & $1$ & $1.5$ & $2.5$ & $5$ & $10.5$ & $23.75$ & $56.75$ & $141$ & $361.75$ \\ \hline
\end{tabular}
\caption{ Exact asymptotics for $\exo(n, C_k)=c_kn+o(n)$ when $k$ is small}\label{tab:cons}
\end{table}

Our next result determines the growth of the constant $c(k)$. As Table \ref{tab:cons} suggests, the function is exponentially growing.

\begin{theorem}\label{limit}
 $$\lim_{k\rightarrow \infty}\sqrt[k]{c(k)}=4.$$
\end{theorem}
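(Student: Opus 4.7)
The plan is to bound $c(k)$ both above and below by quantities whose $k$-th root tends to $4$, pivoting on the Catalan numbers $C_m = \binom{2m}{m}/(m+1)$. For the \textbf{upper bound}, I invoke the equivalent formulation stated just above: $c(k)$ is the asymptotic maximum of $N_{k-2}(T)/n$ over trees $T$ of maximum degree $3$ on $n$ vertices, where $N_m(T)$ denotes the number of $m$-vertex subtrees of $T$. Rooting any such $T$ arbitrarily, every $m$-subtree has a unique topmost vertex. For a fixed non-root vertex $v$, which has at most two children in the rooted tree, the subtrees topmost at $v$ are in bijection with plane partial binary trees of size $m$, of which there are exactly $C_m$. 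Summing over vertices yields $c(k) \le C_{k-2} + O(1/n)$, and Stirling's formula gives $\sqrt[k]{C_{k-2}} \to 4$, hence $\limsup_k \sqrt[k]{c(k)} \le 4$.

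For the \textbf{lower bound}, write $m = k-2$ and take $T_n$ to be the complete binary tree of depth $D = \lfloor\log_2 n\rfloor$, so that $n \sim 2^{D+1}$. For a vertex $v$ at depth $D-h$ in $T_n$, the subtree of $T_n$ below $v$ is a complete binary tree of depth $h$, and the number of $m$-subtrees of $T_n$ topmost at $v$ equals $[x^m]Q_h(x)$, where $Q_0(x) = x$ and $Q_h(x) = x(1+Q_{h-1}(x))^2$. The ratio $[x^m]Q_h/C_m$ is exactly the probability that a uniform random plane partial binary tree of size $m$ has depth at most $h$.

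The classical R\'enyi--Szekeres and Flajolet--Odlyzko theorems on heights of random plane trees imply that this probability tends to $1$ as $h/\sqrt m \to \infty$, so there is an absolute constant $\alpha$ such that $[x^m]Q_h \ge C_m/2$ whenever $h \ge \alpha\sqrt m$. Setting $h_0 := \lceil \alpha\sqrt m\rceil$, the number of vertices of $T_n$ at depth $\le D-h_0$ is at least $2^{D-h_0} \sim n/2^{h_0}$, and each such vertex contributes at least $C_m/2$ to the total subtree count; thus $N_m(T_n) \ge n C_m/2^{h_0+1}$, giving $c(k) \ge C_{k-2}/2^{\alpha\sqrt{k-2}+1}$. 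Taking $k$-th roots, $\sqrt[k]{c(k)} \ge 4^{(k-2)/k}\cdot 2^{-O(\sqrt k/k)} \cdot (k-2)^{-O(1/k)} \to 4$, which matches the upper bound.

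The \textbf{main obstacle} is the depth-concentration input: the complete binary tree is too ``thin'' for every vertex to realize the full Catalan count $C_m$ of topmost subtrees (that would require $2^m$ descendants per vertex), but plane-tree height concentration salvages a constant fraction of those $C_m$ subtrees at a $2^{-O(\sqrt m)}$ fraction of vertices, and this subexponential loss evaporates after we take $k$-th roots.
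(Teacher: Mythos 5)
Your argument is correct, and while the upper bound is essentially the paper's (the paper bounds the number of $k$-subtrees through a fixed vertex of the infinite $3$-regular tree by $\cC_{k+1}-\cC_k<4^{k+1}$ via Lemma 2.6 and divides by the overcount; your ``topmost vertex'' rooting is the same idea with the overcounting handled by uniqueness of the top vertex rather than by dividing by $k$), your lower bound takes a genuinely different route. The paper builds an explicit subfamily: a complete binary ``broom'' of depth $q\approx\tfrac12\log_2 k$ rooted at a vertex $v$ sufficiently high above the last level, with an arbitrary plane binary tree of size $\approx\sqrt k$ hung from each of the $2^q\approx\sqrt k$ tips, yielding $\cC_{\beta}^{2^{q}}$ subtrees per anchor and hence $c(k)\geq 4^k/e^{O(\sqrt k\log k)}$ by Stirling --- entirely elementary and self-contained. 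You instead count \emph{all} plane binary trees of size $m=k-2$ whose height is at most $\alpha\sqrt m$, invoking the R\'enyi--Szekeres/Flajolet--Odlyzko height concentration to guarantee that these form a constant fraction of all $\cC_m$ shapes, and anchor them at the $\sim n/2^{\alpha\sqrt m}$ vertices lying high enough in the complete binary tree. Both arguments lose only a subexponential factor ($e^{O(\sqrt k\log k)}$ versus $2^{O(\sqrt k)}$, yours being slightly tighter), which evaporates under the $k$-th root; the trade-off is that yours is more conceptual and quantitatively a bit stronger but imports a nontrivial classical theorem on heights of random Catalan trees, whereas the paper's construction is explicit and needs only the Catalan asymptotics. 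Two small points of hygiene: for a general vertex $v$ the map from subtrees topmost at $v$ to plane partial binary trees is an injection rather than a bijection (which is all the upper bound needs), and the root of your arbitrarily rooted tree may have three children, contributing at most $\cC_{m+1}-\cC_m$ subtrees --- a single additive term that does not affect the limit.
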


Next we discuss the results of 
 generalized outerplanar number of trees.

\begin{defi}
Let $\ell(T)$ denote the number of leaves of a tree $T$, and in general let $\ell(H)$ denote the number of vertices $v\in V(H)$ of degree one.
\end{defi}


\begin{theorem}[Huynh, Joret, Wood, \cite{Huynh}]\label{outertree} Let $T$ be a tree on $k>2$ vertices. Then $\exo(n, T)= \Theta(n^{\ell(T)})$. 
\end{theorem}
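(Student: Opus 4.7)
The plan is to prove the lower and upper bounds separately. Write $\ell := \ell(T)$; let $T^\circ$ denote the subtree of $T$ spanned by the non-leaf vertices of $T$, and write $d_v := \deg_T(v) - \deg_{T^\circ}(v)$ for the number of $T$-leaves attached at each $v \in V(T^\circ)$, so that $\sum_{v \in V(T^\circ)} d_v = \ell$.

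For the lower bound $\exo(n,T) = \Omega(n^\ell)$, I would provide an explicit construction. Let $V^+ := \{v \in V(T^\circ) : d_v \geq 1\}$, whose size depends only on $T$, and take $G$ to be the tree (hence outerplanar) obtained by starting with a fixed copy of $T^\circ$ and attaching $m := \lfloor (n-|V(T^\circ)|)/|V^+|\rfloor$ pendant vertices at each $v \in V^+$. Fixing the canonical embedding of $T^\circ$ into $G$ and then selecting $d_v$ of the pendants attached at each $v \in V^+$ as the images of the $T$-leaves produces
\[
\prod_{v \in V^+}\binom{m}{d_v} = \Omega(n^\ell)
\]
distinct copies of $T$ in the single graph $G$.

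For the upper bound $\exo(n,T)=O(n^\ell)$, I would proceed by induction on $|V(T^\circ)|$. The base case $|V(T^\circ)|=1$ corresponds to $T=K_{1,k-1}$, for which $\ell=k-1$ and the count of copies in any outerplanar $G$ is $\sum_v \binom{\deg_G(v)}{k-1} \leq (n-1)^{k-2}(4n-6)/(k-1)! = O(n^{k-1})$ via the edge bound $|E(G)| \leq 2n-3$. For the inductive step, pick a leaf $u$ of $T^\circ$ (which exists since $T^\circ$ is a tree with at least two vertices) with unique $T^\circ$-neighbor $w$, and set $T' := T - u - \{T\text{-leaves attached to } u\}$; then $|V({T'}^\circ)| < |V(T^\circ)|$, and every embedding of $T$ into $G$ is uniquely determined by an embedding $\phi : T' \hookrightarrow G$ together with a choice of image $\phi(u)$ among the $G$-neighbors of $\phi(w)$ followed by $d_u$ further $G$-neighbors of $\phi(u)$.

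The main obstacle is closing the recursion with the tight exponent $\ell$, since the naive per-embedding extension factor $\sum_{x\sim\phi(w)}\binom{\deg_G(x)}{d_u}$ can be as large as $\Theta(n^{d_u})$ when $\phi(w)$ is adjacent to a high-degree vertex. To handle this I would strengthen the inductive hypothesis to track certain degree-weighted sums over the images of distinguished vertices of $T'$, exploiting the outerplanar second-neighborhood bound $\sum_{x\sim w}\deg_G(x)\leq 2|E(G)|=O(n)$ valid for every $w$. Tracking the two subcases $\deg_T(w)\geq 3$ (where $\ell(T')=\ell-d_u$) and $\deg_T(w)=2$ (where $\ell(T')=\ell-d_u+1$ but $w$ becomes a leaf of $T'$, so that the strengthened hypothesis saves the extra factor of $n$) balances both sides of the recursion and yields the desired bound $O(n^\ell)$.
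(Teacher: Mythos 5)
Your lower bound is correct and is essentially the paper's own argument (Proposition~\ref{also}): blow up each pendant edge into a star of linear size and count the resulting $\prod_v \binom{m}{d_v}=\Omega(n^{\ell(T)})$ copies. The base case of your upper bound is also fine.

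The upper bound, however, has a genuine gap, and it sits exactly where you yourself locate ``the main obstacle.'' As written, the inductive step only proves $\exo(n,T)=O(n^{\ell+1})$ whenever the deleted branch vertex $u$ has a $T^\circ$-neighbour $w$ with $\deg_T(w)=2$: there $\ell(T')=\ell-d_u+1$, the per-embedding extension factor $\sum_{x\sim\phi(w)}\binom{\deg_G(x)}{d_u}$ can genuinely be $\Theta(n^{d_u})$ (e.g.\ when $\phi(w)$ is adjacent to the apex of $K_1\vee P_{n-1}$), and multiplying the two loses a factor of $n$. This case is not exotic --- it already occurs for $T=P_4$. The proposed repair, ``strengthen the inductive hypothesis to track certain degree-weighted sums over the images of distinguished vertices,'' is never formulated: you do not say which weighted sums are tracked, what bound is claimed for them, or why that bound is itself preserved under the deletion step. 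A natural first attempt, namely bounding $\sum_{\phi}F(\phi(w))$ by $\bigl(\max_y \#\{\phi:\phi(w)=y\}\bigr)\cdot\sum_y F(y)$ with $F(y)=\sum_{x\sim y}\binom{\deg_G(x)}{d_u}$, still gives only $O(n^{\ell+1})$, so the needed statement is more delicate than a single second-neighbourhood inequality; until it is written down and its own inductive propagation verified, the recursion does not close.

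For comparison, the paper avoids this bookkeeping entirely: it fixes the images of all $\ell(T)$ leaves ($O(n^{\ell(T)})$ choices), covers $T$ by leaf-to-leaf paths, and invokes the bound $f(k)\le 4^k$ from Theorem~\ref{paths} --- the number of paths of prescribed length between two \emph{fixed} vertices of an outerplanar graph is bounded by a constant independent of $n$ --- so that each choice of leaf images admits only $O_k(1)$ completions to an embedding of $T$. If you want to salvage your induction, you will need to state and prove the weighted version precisely; otherwise, rerouting the upper bound through a fixed-endpoint path-counting lemma is the shorter path.
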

Note that  $\expo(n, P_k)= \Theta(n^{\alpha(P_k)})$ according to \cite{Gyori}, thus in contrast with planar graphs, the order of magnitude does not depend on the length of the path.

Our main result concerning  the case of trees is to  determine the growth of  $\exo(n, P_k)$ in terms of $k$.  This result provides an alternative proof for Theorem \ref{outertree}.

\begin{theorem}\label{paths}
 $ h(k)\binom{n}{2}< \exo(n, P_{k+1})\leq 4^k\binom{n}{2}$, where $\lim_{k\rightarrow \infty}\sqrt[k]{h(k)}=4.$
\end{theorem}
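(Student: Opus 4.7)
The plan is to treat the upper and lower bounds separately. For the upper bound $\exo(n, P_{k+1}) \le 4^k \binom{n}{2}$, my strategy is to establish the per-pair estimate that in any outerplanar graph $G$ and for any two distinct vertices $u, v \in V(G)$, the number of $u$-$v$ paths of length $k$ in $G$ is at most $4^k$; summing over $\binom{n}{2}$ unordered vertex pairs then immediately yields the stated bound. To prove the per-pair estimate I would assume $G$ is maximal outerplanar (since adding edges can only increase path counts) and induct on the number of vertices. Any maximal outerplanar graph is a triangulated polygon and contains an ear vertex $w$ of degree $2$ whose two neighbors $w_1, w_2$ on the outer face are adjacent. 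A $u$-$v$ path of length $k$ in $G$ either avoids $w$ (in which case it is a path in $G-w$ and is handled by induction), or has $w$ as an endpoint (i.e.\ $u=w$ or $v=w$, so the path begins with an edge $ww_i$ and reduces to a length-$(k-1)$ path from $w_i$ in $G-w$), or uses $w$ internally (using both edges $ww_1$ and $ww_2$, hence corresponding to a length-$(k-1)$ path in $G-w$ passing through the chord $w_1w_2$). A careful case-analysis of these three types of contributions yields the $4^k$ per-pair bound.

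For the lower bound, I would construct, for each $k$, an outerplanar graph $G_n$ on $n$ vertices containing at least $h(k) \binom{n}{2}$ copies of $P_{k+1}$, where $\sqrt[k]{h(k)} \to 4$. Since the Catalan numbers $C_k \sim 4^k/(k^{3/2}\sqrt{\pi})$ count structures naturally associated with triangulations of polygons (in particular, the subtree counts in the dual binary tree appearing in Theorem \ref{asym}), the base $4$ can be attained by taking a specific maximal outerplanar graph whose dual binary tree is chosen so that many vertex pairs are linked by exponentially many long paths. Concretely, my idea is to chain together Catalan gadgets $H_\ell$ of size $\Theta(\ell)$ (triangulations in which two designated vertices are joined by $\Omega(4^\ell / \mathrm{poly}(\ell))$ length-$\ell$ paths) along a spine, and then use an averaging argument to show that $\Theta(n^2)$ pairs of vertices in the resulting graph admit $\Omega(4^k/\mathrm{poly}(k))$ paths of length $k$ between them; this yields the lower bound with $\sqrt[k]{h(k)} \to 4$ after invoking Stirling on the Catalan asymptotics.

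The main obstacle is the lower bound construction. A naive repetition of a single Catalan gadget yields only $\Omega(n \cdot 4^k)$ paths globally, since only one pair of vertices benefits from the exponential blow-up; to reach the $\binom{n}{2}$ factor one needs a genuinely global construction in which $\Theta(n^2)$ pairs each admit exponentially many paths between them. Designing such a graph inside a single connected outerplanar structure, and matching the averaged per-pair count to the Catalan rate $4^k$, is where the bulk of the technical effort lies; this links the proof directly to the subtree-counting framework developed for Theorem \ref{asym}. By contrast, the upper bound reduces to the clean inductive per-pair estimate via ear-vertex peeling described above.
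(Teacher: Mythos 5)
Your upper-bound strategy is the same in outline as the paper's (a per-pair bound of $4^k$ on the number of length-$k$ paths between two fixed vertices, summed over $\binom{n}{2}$ pairs), but the ear-peeling induction you propose does not close. Let $w$ be the degree-$2$ ear with neighbours $w_1,w_2$ and suppose $w\notin\{u,v\}$. The $u$--$v$ paths of length $k$ in $G$ split into those avoiding $w$, which are length-$k$ paths in $G-w$ and are bounded by $4^{k}$ by induction, and those using $w$ internally, which correspond to length-$(k-1)$ $u$--$v$ paths in $G-w$ through the chord $w_1w_2$ and are bounded by $4^{k-1}$ by induction. The resulting bound $4^{k}+4^{k-1}$ exceeds $4^{k}$, and since the inductive hypothesis controls each length separately, no case analysis of the three path types repairs this without a genuinely stronger invariant (a generating-function version of the same peeling picks up a factor $1+x$ per ear and blows up). The paper avoids this by recursing on the path length at a fixed endpoint instead of on the vertex count: with $f(k)$ the maximum number of length-$k$ paths between two fixed vertices and $g(r)$ the same quantity when the endpoints are consecutive on the outer face, splitting along the two chords at $u$ closest to $v$ gives $f(k)\le\sum_{r=1}^{k}2g(r)f(k-r)$; the Catalan recursion gives $g(r)\le\cC(r-1)$, and the induction closes precisely because the Catalan generating function satisfies $F(1/4)=2$, so that $f(k)\le 2\cdot 4^{k-1}F(1/4)=4^{k}$.

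For the lower bound you correctly isolate the real difficulty, namely producing a single outerplanar graph in which $\Theta(n^2)$ pairs are each joined by roughly $4^{k}$ paths of length $k$, but the construction you sketch cannot deliver it. In a chain of Catalan gadgets along a spine, any two vertices at spine-distance greater than $k$ are joined by no path of length $k$ whatsoever, so only $O(nk\cdot|H_\ell|)=o(n^2)$ pairs can contribute and no averaging argument recovers the $\binom{n}{2}$ factor. The paper's construction, the numeral system graph $G(N,t)$ with $t=\lfloor\sqrt{k}\rfloor$, is a hierarchical (not linear) triangulation on $N^t$ vertices whose diameter is only $O(t)=O(\sqrt{k})$; this low diameter is exactly what makes essentially every ordered pair $(A,B)$ usable, and an explicit injection from ``permitted sequences'' into $A$--$B$ paths of length $k$ yields at least $\Gamma(k-2t,t)$ such paths per pair with $\sqrt[k]{\Gamma(k-2t,t)}\to 4$. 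As written, your lower bound is a plan that identifies the right target but omits the one ingredient (a low-diameter global structure) that the paper's proof actually supplies.
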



The paper is organised as follows. In the next section we describe the equivalence between determining  $\exo(n, C_k)$ and  a $k$-vertex subtree enumeration problem. Then we  verify the recursion of Theorem \ref{asym} and give an estimate on the multiplicative constant factor obtained from the recursion.
 We prove Theorem \ref{paths} on the number of paths in Section $3$ and derive Theorem \ref{outertree}.
 We also prove some partial results on the extremal graphs, and compare them to  the very recent complete solution of the Cvetkovi\'c-Rowlinson conjecture \cite{BoNing}, which claims that the join graph $K_1\vee P_{n-1}$ attains the maximum spectral radius among all $n$-vertex outerplanar graphs, except for $n=6$.
 


\section{Maximal number of cycles in outerplanar graphs and $k$-vertex subtrees of bounded degree trees}

We start this subsection by deducing the correspondence between $\exo(n, C_k)$ and the problem of maximising the number of subtrees of size $k-2$ in trees having degrees at most $3$.

\begin{defi}[Weak dual] 
The weak dual of an embedded outerplanar graph is defined as follows. Take a vertex for every bounded face of the embedding and connect every pair of vertices corresponding to adjacent faces of the outerplanar graph. 
\end{defi}

Note that the weak dual graph does not contain cycles as a cycle would imply the existence of a vertex of the outerplanar graph which is not lying on the outer face. Also, suppose that the outerplanar graph $G$ is a maximal or triangulated on $n$ vertices. Then $G$ has $2n-3$ edges and the weak dual is a tree on $n-2$ edges, having degrees at most $3$. It is easy to see that every tree having these properties can be obtained by the weak dual of a corresponding maximal outerplanar graph. Trees in which each node has at most $2$ children are called {\em binary trees}.

\begin{defi}[BFS $d$-tree or greedy tree]
Consider a breadth-first search on a $d$-regular infinite tree from an arbitrary root. We call BFS $d$-tree or greedy tree the subtree of the $d$-regular infinite tree induced by the first $n$ found vertices during the search.
\end{defi}

\begin{defi}  Let $g_k(F)$ denote the number of $k$-vertex subtrees of $F$.
\end{defi}

\begin{theorem}\label{main} For the maximum number of $k$-cycles $C_k$, we have  
$\exo(n, C_k)=
 g_{k-2}(T_{n-2})$  where $T_{n-2}$ is a BFS $3$-tree on $n-2$ vertices.
For every $k$ there are infinitely many values of $n$ for which
 the extremal graph is not unique, i.e. there are several different extremal graphs. 
\end{theorem}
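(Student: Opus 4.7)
The plan is to exploit the duality between maximal outerplanar graphs and binary trees to reduce $\exo(n,C_k)$ to a subtree-counting problem and then invoke the result of Andriantiana, Wagner and Wang \cite{k-tree}. First, I would observe that the extremum is attained by a maximal outerplanar graph: any outerplanar $G$ on $n$ vertices can be triangulated by adding chords while preserving outerplanarity, and adding edges cannot destroy any cycle. A maximal outerplanar graph has exactly $n-2$ triangular faces and its weak dual $T_G$ is a tree on $n-2$ vertices of maximum degree $3$. Next I would establish the bijection between $k$-cycles in $G$ and $(k-2)$-vertex subtrees of $T_G$: each $k$-cycle $C$ bounds a region with no interior vertices (since $G$ is outerplanar), triangulated by some number $t$ of faces; counting triangle-edge incidences gives $3t=2(t-1)+k$, forcing $t=k-2$, and these triangles correspond to a connected set of vertices in $T_G$, i.e., a subtree of size $k-2$. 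The reverse correspondence is analogous: the region swept out by a $(k-2)$-vertex subtree has boundary of length $3(k-2)-2(k-3)=k$. It follows that $\exo(n,C_k)=\max_T g_{k-2}(T)$ over binary trees $T$ on $n-2$ vertices, and \cite{k-tree} identifies the greedy tree as the maximizer, giving $\exo(n,C_k)=g_{k-2}(T_{n-2})$.

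For the non-uniqueness statement I would handle cases. For $k \in \{3,4\}$, $g_{k-2}(T)$ depends only on $|V(T)|$ (equaling $|V(T)|$ or $|V(T)|-1$ respectively), so every maximal outerplanar graph on $n \geq 5$ vertices is extremal. For $k=5$, $g_3(T)=\sum_v \binom{d(v)}{2}$ depends only on the degree sequence of $T$, and for every sufficiently large $n$ several non-isomorphic binary trees on $n-2$ vertices share the extremal degree sequence (for instance, arranging the degree-$3$ vertices along a path versus as a subdivided star). For $k \geq 6$, I would produce non-uniqueness for infinitely many $n$ by exhibiting nontrivial local modifications of the greedy tree that preserve the $(k-2)$-subtree count: identify two non-isomorphic rooted pendant substructures whose contributions to $g_{k-2}$ (expressed via the rooted-subtree recursion) coincide, then swap one for the other at a suitable attachment point to obtain a distinct extremal tree. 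Iterating the construction as $n$ grows yields the infinitely many values claimed.

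The main obstacle is this last step: for arbitrary $k$, constructing explicit non-trivial pendant substitutions preserving $g_{k-2}$ requires a careful manipulation of the rooted-subtree recursion underlying \cite{k-tree}, so that a pair of locally inequivalent rooted branches with matching generating data can be exhibited in a growing family. The rest of the argument --- the duality reduction, the short edge-count, and invoking \cite{k-tree} as a black box --- is then essentially bookkeeping.
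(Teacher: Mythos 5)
Your reduction of $\exo(n,C_k)$ to maximizing $g_{k-2}$ over binary trees on $n-2$ vertices is exactly the paper's argument: pass to a maximal outerplanar graph, use the bijection between $k$-cycles and $(k-2)$-vertex subtrees of the weak dual (your edge-count $3t=2(t-1)+k$ just makes explicit what the paper states in one line), and invoke Andriantiana--Wagner--Wang as a black box. That part is complete and correct.

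The non-uniqueness claim is where your proposal has a genuine gap, and you flag it yourself: for $k\geq 6$ you propose to find two non-isomorphic rooted pendant branches with identical $(k-2)$-subtree generating data and swap them, but you do not exhibit such a pair, and producing one for arbitrary $k$ is precisely the hard content you would need. The paper avoids this entirely with a different, concrete device: choose $n$ so that the greedy tree has exactly $2^{\ell}+1$ leaves on its bottom level, where $\log_2 n - 2 > \ell \geq \lceil k/2\rceil$. The lone ``extra'' leaf $w$ then has its nearest common ancestor with every other bottom-level leaf more than $\ell$ levels up, so no $(k-2)$-vertex subtree can contain $w$ together with another bottom-level leaf; hence the contribution of $w$ to $g_{k-2}$ is the same no matter which bottom-level slot it occupies. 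Reattaching $w$ at a slot whose last common ancestor with the existing bottom-level leaves sits at a different height produces a non-isomorphic tree with the same (maximal) subtree count, and this works for infinitely many $n$ at once. Your cases $k\in\{3,4,5\}$ are fine (there $g_{k-2}$ depends only on the order or the degree sequence), but without the quantitative separation argument, or an explicit branch-swapping pair, the general case of the second assertion remains unproved in your write-up.
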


\begin{proof}
To obtain the maximum number of cycles $C_k$ in an $n$ vertex outerplanar graph, we may clearly  suppose that the outerplanar graph is maximal. Each cycle subgraph $C_k$ is  triangulated to $k-3$ triangular faces, thus it corresponds to a subtree on $k-2$ vertices of the weak dual. Moreover, observe that this provides a one-to-one correspondence.  Hence our aim is to give a bound on the number of $k-2$-subtrees of the weak dual and describe the extremal graphs for which the number of $k-2$-subtrees is maximal in terms of $n$.

Székely and Wang initiated the investigation of counting the subtrees of trees in \cite{Szekely, Szekely2, Szekely3} and some general properties were described. They proved that those   binary trees which maximize the total number of subtrees are exactly the  the binary trees which
minimize the Wiener index, defined as the sum of all pairwise distances between vertices, and greedy trees have these properties. Note that this graph parameter is widely used in biochemistry.
In their paper \cite{Szekely}, the authors calculated the total number of subtrees of these extremal binary trees. We mention  that it had also a number theoretic aspect as well, as these formulas used a new representation of integers as a sum of powers of $2$.  The results were extended
to trees having a maximum degree constraint \cite{Kirk} and the extremal structures once again coincide with the ones
found by Székely and Wang. Later Yan and Yeh \cite{Yan} presented an algorithm for counting the number
of subtrees of a tree.

Following these footsteps, finally Andriantiana,  Wagner and  Wang proved that among trees having degrees less than of equal to $d$,  greedy trees maximize not only the total number of subtrees but the number of $k$-subtrees as well for every $k$    \cite{k-tree}.
This in turn implies that to prove a result on $\exo(n, C_k)$ it is sufficient to calculate the number of $k-2$-subtrees of the  greedy  BFS $3$-tree. \\ Concerning the the unicity of extremal graphs  suppose that $n$ is given in such a way that the greedy tree has $2^{\ell}+1$ leaves on the last level for some $\ell$ for which $\log_2{n}-2>\ell\geq \lceil k/2 \rceil$ holds. Consider the  leaf $w$ last reached during the BFS search. There is no $k$-subtree containing both $w$ and a further   leaf from the last level   due to the structure of the tree, as they have common ancestor only more than $\ell$ levels above. This implies that if we erase $w$, then no matter where we insert a further leave $w'$ instead of $w$, the number of trees will increase by the same number. However, the last common ancestor of the new leaf $w'$ and the other leaves on the last level may appear on any level apart from the lowest $\ell +1$ levels.
\end{proof}

As we mentioned in the Introduction, by Theorem \ref{main}  the extremal structure concerning  $\expo(n, C_k)$ can be derived from  the problem of maximizing the number subtrees in binary trees \cite{k-tree}. However the extremal value was not determined before. Thus we continue by presenting exact results  for short cycles and then we determine the corresponding asymptotics for each cycle length $k$ by proving Theorem \ref{asym} and providing a bound on the constant $c=c(k)$.

\begin{nota} Let $\mathcal{T}_n^{\leq d}$ denote  the family of trees on $n$ vertices having degrees at most $d$. The case $d=3$ is usually known as the family of $n$-vertex  binary trees. 
\end{nota}

\begin{prop} 
\begin{equation}\label{eq:1}
   \exo(n+2, C_3) =\max\{ g_1(G) \ | \ G \in \mathcal{T}_n^{\leq 3}\}=n \ \mbox { \ and \ }
   \exo(n+2, C_4)= \max\{ g_2(G)\  |\  G \in \mathcal{T}_n^{\leq 3}\}=n-1,
\end{equation}
and equality holds for every tree $\in \mathcal{T}_n^{\leq 3}$
\begin{equation}\label{eq:2}
   \exo(n+2, C_5)= \max\{ g_3(G) \  |\  G \in \mathcal{T}_n^{\leq 3}\}=\left\lfloor \frac{3n-6}{2}\right\rfloor,
\end{equation}
and equality holds for every tree having $\left\lfloor \frac{n-2}{2}\right\rfloor$ vertices of degree $3$,

\begin{equation}\label{eq:3}
   \exo(n+2, C_6)= \max\{ g_4(G) \  |\  G \in \mathcal{T}_n^{\leq 3}\}=\left\lfloor \frac{5n-18}{2}\right\rfloor,
\end{equation}

and equality holds for every tree having no vertices of degree $2$ or for trees with a single vertex of degree $2$, adjacent to a leaf.
\end{prop}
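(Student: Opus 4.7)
The plan is to invoke Theorem \ref{main}, which identifies $\exo(n+2, C_k)$ with $\max\{g_{k-2}(G): G \in \mathcal{T}_n^{\leq 3}\}$, and then establish each claimed value by a direct counting argument on binary trees. For $k \in \{3,4\}$ the statements are immediate: a $1$-vertex subtree is a vertex and a $2$-vertex subtree is an edge of a tree on $n$ vertices, so $g_1(G)=n$ and $g_2(G)=n-1$ for every $G \in \mathcal{T}_n^{\leq 3}$, and the equalities hold universally.

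For $k=5$ I count $3$-vertex subtrees by their central vertex: each $P_3$ corresponds to an unordered pair of neighbours of some vertex, so $g_3(G)=\sum_v \binom{d(v)}{2}$. Writing $n_i$ for the number of degree-$i$ vertices, this sum becomes $n_2+3n_3$. The binary-tree identities $n_1+n_2+n_3=n$ and $\sum_v d(v)=2(n-1)$ force $n_3=n_1-2$, which after elimination gives $g_3(G)=n+n_3-2$. Since $n_2\ge 0$, we get $n_3\le \lfloor(n-2)/2\rfloor$ and therefore $g_3(G)\le \lfloor(3n-6)/2\rfloor$, with equality exactly when $n_3=\lfloor(n-2)/2\rfloor$.

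For $k=6$ I split $4$-vertex subtrees into paths $P_4$ and stars $K_{1,3}$. The stars are counted by their centres, contributing $\sum_v \binom{d(v)}{3}=n_3$; the paths are counted by their central edge, each edge $uv$ producing $(d(u)-1)(d(v)-1)$ paths. Grouping edges by the unordered pair of endpoint degrees and letting $e_{ij}$ denote the number of edges of type $\{i,j\}$, I obtain
$$g_4(G)=e_{22}+2e_{23}+4e_{33}+n_3.$$
Using the handshake identities $e_{12}+e_{13}=n_1$, $2e_{22}+e_{12}+e_{23}=2n_2$ and $2e_{33}+e_{13}+e_{23}=3n_3$, one can eliminate $e_{22}, e_{33}, e_{13}$ and rewrite
$$g_4(G)=n_2+5n_3-4+\frac{3e_{12}-e_{23}}{2}.$$
For $n_2=0$ (which forces $n$ even) the last term vanishes and $g_4=(5n-18)/2$. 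For $n_2=1$ (which forces $n$ odd), the unique degree-$2$ vertex $w$ cannot have two leaf neighbours when $n\ge 5$ (otherwise a $P_3$-component would detach), so a brief case check shows $3e_{12}-e_{23}$ is maximised at $2$ precisely when $w$ has one leaf and one degree-$3$ neighbour, giving $g_4=(5n-19)/2$. Finally, any $n_2\ge 2$ is strictly suboptimal: each increment of $n_2$ by $2$ decreases $n_3$ by $1$, making the main term $n_2+5n_3$ drop by $3$, while the best achievable gain in $(3e_{12}-e_{23})/2$ is only $2$.

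The main obstacle is the $k=6$ step, because $g_4(G)$ depends genuinely on the finer edge-type statistic $e_{ij}$ rather than only on the degree sequence $(n_1,n_2,n_3)$. Ruling out all non-extremal configurations---the unique degree-$2$ vertex placed between two degree-$3$ neighbours, adjacent pairs of degree-$2$ vertices, or several degree-$2$ vertices forming pendant paths---requires running the local exchange argument above and verifying in each alternative that $g_4$ strictly decreases.
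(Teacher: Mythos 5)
Your proposal is correct in outline and arrives at the same values, but it diverges from the paper in how the two nontrivial cases are closed. For $C_3,C_4,C_5$ the two arguments essentially coincide: both pass to the weak dual via Theorem \ref{main}, and both compute the number of $3$-vertex subtrees as $\sum_v\binom{d(v)}{2}=n_2+3n_3=n-2+n_3$; the paper then optimises $n_3$ by a branch-shifting transformation, while you just read off $n_3\le\lfloor (n-2)/2\rfloor$ from $n_2\ge 0$, which is a cleaner finish. The genuine difference is at $C_6$: the paper only sketches a local exchange argument (first reduce to at most one degree-$2$ vertex, then force it next to a leaf) and explicitly leaves the details to the reader, whereas you derive the global identity $g_4(G)=n_2+5n_3-4+\tfrac{1}{2}(3e_{12}-e_{23})=\tfrac{5n-18}{2}+\tfrac{1}{2}(3e_{12}-e_{23}-3n_2)$ and reduce everything to bounding the edge-type statistic $3e_{12}-e_{23}$. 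This buys a checkable closed form that yields the extremal value and the characterisation of extremal trees simultaneously, instead of a sequence of exchange steps. One step of yours is stated too loosely, however: the claim that increasing $n_2$ by $2$ gains at most $2$ in $\tfrac{1}{2}(3e_{12}-e_{23})$ is not the right bound as written (naively $e_{12}$ can grow by $2$, contributing $3$ to that half-term). The bound you actually need, and which does hold for $n\ge 5$ and $n_2\ge 2$, is $3e_{12}-e_{23}\le 3n_2-2$: connectivity forbids a degree-$2$ vertex with two leaf neighbours, so $e_{12}\le n_2$, and $e_{12}=n_2$ together with $e_{23}\le 1$ would force an adjacent pair of degree-$2$ vertices each carrying a leaf, i.e.\ a $P_4$-component; hence either $e_{12}\le n_2-1$ or $e_{23}\ge 2$, giving $g_4\le\tfrac{5n-18}{2}-1<\lfloor\tfrac{5n-18}{2}\rfloor$. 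You flag this residual casework yourself, and the paper is no more complete on exactly this point, so I would not call it a gap so much as the one place where your computation still needs to be carried out in full.
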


\begin{proof}
Equality in the cases of (\ref{eq:1}) in turn follows from the fact that the maximal value of $\exo(n+2, C_k)$ ($k\in \{3,4\}$) is attained on maximal outerplanar graphs, and such outerplanar graphs have exactly $2n+1$ edges from {which}  have $n$ triangular faces. Hence  according to Theorem \ref{main} it is enough to calculate the number of nodes, resp. edges in the weak dual, a tree on $n$ vertices.\\
Similarly to the previous case, (\ref{eq:2}) follows from counting $P_3$ subgraphs in binary trees. Suppose that a binary tree has $t_i$ vertices of degree $i$. Then the number of $P_3$ subgraphs is clearly $3t_3+t_2$, while $t_1+t_2+t_3=n$. Consequently, the number clearly increases if one can cut a branch from a degree $2$ vertex and glue it to another vertex of degree $2$. This means that we must have $t_3=\lfloor n/2\rfloor-1$, $t_1=\lfloor n/2\rfloor+1$ and the claim follows.\\ 
Case (\ref{eq:3}) can be proved  similarly by taking a tree transformation, i.e shifting branches among the tree like in the previous case,  and showing first that there cannot be more than $1$ vertices of degree $2$, then showing that the number of subtrees can be increased also if the degree $2$ vertex is not adjacent to a leaf. We leave the rest to the interested reader.  
\end{proof}

Next we show that $\exo(n, C_k)=(c(k)+o(1))n$ for some constant $c(k)$ depending only on $k$.  We introduce a simple lemma  to get this.

\begin{lemma}\label{Cata} Take an infinite $3$-regular tree. The number of $k$-vertex subtrees containing a fixed vertex $v$ equals $\frac{3}{2k+1}\binom{2k+1}{k-1}=\cC(k+1)-\cC(k)$ where $\cC(k)$ denotes the $k$th Catalan number.
\end{lemma}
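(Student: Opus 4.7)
The plan is to root the infinite $3$-regular tree at $v$. After rooting, $v$ has three children, and every other vertex has exactly two children, so the three subtrees hanging off $v$ are each isomorphic to the infinite plane binary tree $B$ in which the root has two children and every non-root vertex has two children. A size-$k$ subtree $T$ of the $3$-regular tree that contains $v$ is determined by specifying, for each of the three subtrees $B_1,B_2,B_3$ of $B$-type hanging off $v$, a (possibly empty) rooted subtree of $B_i$ of some size $k_i\ge 0$ containing the root of $B_i$, subject to $k_1+k_2+k_3=k-1$.

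Next I would establish the key enumerative ingredient: if $a_m$ denotes the number of subtrees of $B$ of size $m\ge 1$ containing the root of $B$, then $a_m=\mathcal{C}(m)$, the $m$th Catalan number (with the convention $\mathcal{C}(0)=1$ accounting for the empty subtree). This is immediate from the standard bijection between rooted subtrees of the infinite left/right plane binary tree and plane binary trees on $m$ nodes: each included non-root vertex is either a left- or a right-child of its parent, exactly as in the Catalan recursion $\mathcal{C}(m)=\sum_{i+j=m-1}\mathcal{C}(i)\mathcal{C}(j)$. Hence
\begin{equation*}
\#\{\text{size-}k\text{ subtrees containing }v\}=\sum_{k_1+k_2+k_3=k-1}\mathcal{C}(k_1)\mathcal{C}(k_2)\mathcal{C}(k_3)=[x^{k-1}]\,C(x)^{3},
\end{equation*}
where $C(x)=\sum_{n\ge 0}\mathcal{C}(n)x^n$.

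Then I would extract the closed form using the defining identity $xC(x)^{2}=C(x)-1$. Multiplying by $C(x)$ gives $xC(x)^{3}=C(x)^{2}-C(x)$, so
\begin{equation*}
[x^{k-1}]C(x)^{3}=[x^{k}]\bigl(C(x)^{2}-C(x)\bigr)=\mathcal{C}(k+1)-\mathcal{C}(k),
\end{equation*}
where I used $[x^{k}]C(x)^{2}=\mathcal{C}(k+1)$, itself a direct consequence of $xC(x)^{2}=C(x)-1$.

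Finally I would verify the explicit binomial formula by a short manipulation: from $\mathcal{C}(k+1)=\frac{2(2k+1)}{k+2}\mathcal{C}(k)$ one gets $\mathcal{C}(k+1)-\mathcal{C}(k)=\frac{3k}{k+2}\mathcal{C}(k)=\frac{3(2k)!}{(k-1)!(k+2)!}=\frac{3}{2k+1}\binom{2k+1}{k-1}$, which is the claimed value. The argument is essentially a direct computation; the only conceptual step is recognizing that each of the three branches at $v$ is a copy of the plane binary tree and therefore contributes a Catalan-number generating function, so there is no real obstacle beyond identifying this structure correctly.
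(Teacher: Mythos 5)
Your proof is correct and follows essentially the same route as the paper's: both rest on the observation that rooted subtrees of a binary branch hanging off $v$ are counted by Catalan numbers, and both then reduce the count at $v$ to a Catalan convolution yielding $\mathcal{C}(k+1)-\mathcal{C}(k)$. The only cosmetic differences are that you split symmetrically into all three branches and extract $[x^{k-1}]C(x)^3$ by generating-function algebra, whereas the paper first counts the $\mathcal{C}(k)$ subtrees avoiding a fixed neighbour $w$ and then convolves over the size of the branch through $w$ to get $\sum_{j=0}^{k-1}\mathcal{C}_j\mathcal{C}_{k-j}$ directly from the Catalan recursion; you also supply the explicit binomial-coefficient verification that the paper leaves implicit.
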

\begin{remark} This integer sequence appears in several other enumeration problems, see \cite{OEIS}.
\end{remark}

\begin{proof}[Proof of Lemma \ref{Cata}]
Let us count first those $k$-vertex trees through a fixed vertex  $v$ which does not contain a fixed neighbour $w$ of $v$. We claim that their number is $\cC(k)$ and prove by induction. If we define the number of trees on zero vertices by $1$, then the statement clearly holds for $k=0$ and \mbox{$k=1$}, and in general it follows from the recursive formula of the Catalan numbers $\cC_k=\sum_{j=0}^{k-1} \cC_j\cC_{k-1-j}$. Indeed,  distinguishing between the cases describing the distribution  $(j,k-1-j : j=0,\ldots, k-1)$ of the number of vertices sitting on the first, resp. second branch joint to $v$, we get the desired formula by induction. \\
Using the previous subcase we are ready to prove the lemma. Suppose that a $k$ vertex tree containing $v$  has exactly $j$ vertices on the branch starting with $w$. Then the number of subtrees after deleting this branch is $\cC(k-j)$ according to our previous observation. On the other hand,  the number of branches having exactly $j$ vertices on the branch starting with $w$ is also a Catalan number, namely $\cC(j)$ by the same argument. Thus in total we have $$\sum_{j=0}^{k-1} \cC_j\cC_{k-j}=\sum_{j=0}^{k} \cC_j\cC_{k-j}-\cC_{k}$$ subtrees, completing the proof.
\end{proof}

\begin{proof}[Proof of Theorem \ref{asym}]
It is enough to show the statement for complete binary trees, that is, in those $3$-regular trees where the leaves of the tree are on the same level and every other vertex is of degree $3$. So suppose that $T$ is a $3$-regular greedy tree of $h+1$ levels  on $1+\sum_{i=0}^h 3\cdot 2^i$ vertices.

We introduce the notion  $t(k,r)$ for  the number of those $k$-vertex trees containing a fixed vertex $v$ from the lowest level of $T$ (having index $h$)  which have exactly $r$ vertices on the lowest level.

$ t(k,0)=0$ $\forall k$ as $v$ is on level $h$.

It is also clear that $t(0,r)=0$ $ \forall r$, $t(1,1)=1$, $t(1,r)=0$ if $r\ne 1$, and $t(k,r)=0$ if $r>k$.

Then we can determine $t(k,r)$ for all other values using the following recursion. 

 \begin{equation}\label{recursion1}
     t(k,r)=\sum_{s=1}^k t(k-r,s)\binom{2s-1}{r-1}.
 \end{equation}

To obtain (\ref{recursion1}), take the parent vertex $u$ of $v$ in the tree $T$ and erase each vertex of the $k$-vertex tree which is on level $h$. This way we get  $k-r$-vertex tree containing $u$.\\
Every $k-r$-vertex tree containing $u$ which has $s$ vertices on its lowest level $h-1$ can be expanded by $r$ vertices on  level $h$ exactly $\binom{2s-1}{r-1}$ different ways if we require that one of these $r$ vertices is $v$.
This enables us to determine the number of $k$-vertex trees having exactly $r$ vertices on level $h$ as $$3\cdot 2^h \cdot\frac{t(k,r)}{r}$$ since $v$ can take $3\cdot 2^h$ positions on level $h$ but each of these subtrees is counted $r$ times.

Let us define 
 $$c(k):=\sum_{r=1}^{k} \frac{t(k,r)}{r}.$$

By the above reasoning, exactly $3\cdot 2^g\cdot c(k)$ trees are reaching level $g$ but not having vertices on levels larger than $g$;  if $g$ is large enough (where $g>k$ clearly suffice.) Since the number of vertices on levels of index less than $k$ is constant, $c(k)$ is indeed the value we needed to determine.
\end{proof}

The recursion does not directly implies the order of magnitude of $c(k)$ in terms of $k$. We determine the growth of this function.

\begin{proof}[Proof of Theorem \ref{limit}]
First observe that every vertex of an infinite $3$-regular tree appears in at most $4^k$ subtrees of order $k$ in view of Lemma  \ref{Cata} and every subtree is counted $k$ times, thus $\sqrt[k]{c_k}< 4$ for all $k$.
On the other hand we will present a family of $k$-vertex subtrees such that the $k$-th root of the  cardinality  is tending to $4$. \\
  Let us fix a positive integer $q$ (to be determined later on) and take a root vertex $v$ on level $h_v:=h-q-\lceil\frac{k}{2^q}\rceil$ in a $3$-regular greedy tree of $h$ levels. Let us build a tree from $v$ by taking all the $2^{q+1}-2$ descendants on the next $q$ levels. Finally, let us root an arbitrary tree of size $\lfloor\frac{k}{2^q}\rfloor-2$ independently from each descendant of $v$ on level $h_v+q$. Note that every rooted tree is a subtree of the complete binary tree $T$ of $h$ levels since $\lfloor\frac{k}{2^q}\rfloor-2$ is less than the distance between the root and the lowest level $h$.
  Denoting $\lfloor\frac{k}{2^q}\rfloor-2$ by $\beta$, we have $\cC_{\beta}^{2^{q}}$ trees from $v$ and in total we have
  $$\cC_{\beta}^{2^{q}}\cdot 3\cdot 2^{h_{v}}$$ 
  such  trees with highest vertex being on level $h_{v}$ and they are of order $2^{q+1}-1+2^q\cdot (\lfloor\frac{k}{2^q}\rfloor-2)<k$. Obviously these trees can be completed to different $k$-subtrees of the underlying greedy tree by choosing added vertices outside the set  of descendants of the root vertex $v$.
  
  Suppose that $q:=\lceil \frac{1}{2}\log_2{k}\rceil$, i.e. $2^q \approx \sqrt k$. Then it is not hard to determine the order of magnitude of the constant $c(k)$  by  estimating the ratio of the number of the above trees and the number of vertices in the greedy tree.

  Here we use the standard estimates   $\cC_n \sim \frac{4^n}{n^{3/2}\sqrt{\pi}}$ of the Catalan numbers  to compute the ratio as

   $$\frac{\mbox{number \ of \ subtrees \ in \ consideration }}{n}\geq\frac{\cC_{\beta}^{2^{q}}\cdot 3\cdot 2^{h_{v}}}{3\cdot 2^{h+1}}>  \frac{4^k}{(2\cdot \sqrt{\pi})^{\sqrt{k}}\cdot k^{0.75(\sqrt{k}+1)}\cdot 4^{3\sqrt{k}}},$$
  and from this our theorem follows.
\end{proof}

\section{Maximal number of trees}
 In this section we present a proof for  Theorem \ref{paths} on the maximum number of paths $P_{k+1}$ with $k$ edges ($k$-paths) in outerplanar graphs and then derive Theorem \ref{outertree}. We use the standard notation $\cN(G, H)$ for the number of subgraphs isomorphic to $H$ in $G$, 
and
$G_1 \vee G_2$ for the join operation for  graphs $G_1$ and $ G_2$, where
$G_1 \vee G_2$ is obtained from the disjoint union of $G_1$ and $ G_2$ by connecting every pair of vertices $\{u,v\} \in V(G_1)\times V(G_2)$.

\subsection{Number of paths, upper bound}

\begin{nota}
Let $f(k)$  denote the maximum number of $k$-paths  with fixed endnodes in outerplanar graphs. We define $f(0):=1$.\\ Analogously,  let  $g(k)$  denote the maximum number of $k$-paths  with fixed endnodes in outerplanar graphs, where the endnodes are consecutive on the outer face.
\end{nota}

\begin{prop}
$$ f(k)\leq \sum_{r=1}^{k} 2\cdot g(r)f(k-r)  $$
\end{prop}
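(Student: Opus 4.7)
The plan is to decompose each $k$-path $P=u_0u_1\cdots u_k$ from $u$ to $v$ in an outerplanar graph $G$ as a concatenation $P=P_1\cdot P_2$, where $P_1=u_0u_1\cdots u_r$ is a path of length $r\in\{1,\ldots,k\}$ between two vertices consecutive on the outer face of some outerplanar subgraph $G_1\subseteq G$ (contributing at most $g(r)$ such $P_1$'s per choice of the other endpoint $u_r$), and $P_2=u_ru_{r+1}\cdots u_k$ is an arbitrary path of length $k-r$ in a complementary outerplanar subgraph $G_2\subseteq G$ (contributing at most $f(k-r)$). The factor $2$ in the recurrence will come from the two possible choices for $u_r$ as an outer-face neighbour of $u_0$ inside $G_1$.

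I would first reduce to the case where $G$ is maximal outerplanar, since adding edges preserves outerplanarity and can only increase the number of $k$-paths. Then the outer face of $G$ is a Hamiltonian cycle and the interior is a triangulated polygon. For a given $P$, the decomposition index $r$ would be chosen as the smallest $r\ge 1$ for which the chord (or, if $u_0u_r\notin E(G)$, the virtual chord drawn inside the polygon along $u_0u_r$) separates $u_1,\ldots,u_{r-1}$ from $u_{r+1},\ldots,u_k$ in the planar embedding. By the non-crossing property of chords in a maximal outerplanar graph combined with the simplicity of $P$, such an $r$ exists; the two sub-polygons cut off by $u_0u_r$ are themselves outerplanar, and $u_0,u_r$ are consecutive on their common outer face. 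The first piece $P_1$ then lives in the sub-polygon $G_1$ on the "non-$v$" side of $u_0u_r$, and $P_2$ lives in the complementary sub-polygon $G_2$ containing $v$.

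The main obstacle is making this decomposition precise: one must define the virtual chord $u_0u_r$ when the edge is not already in $G$, verify that it cleanly splits the polygon into two outerplanar sub-polygons, and check that each $k$-path $P$ is assigned at most one decomposition index $r$ up to the binary side-choice absorbed by the factor $2$. The non-crossing property of chords in the outerplanar embedding, together with the fact that a simple path cannot re-cross a separating chord without revisiting one of its endpoints, is precisely what ensures that $P_1$ and $P_2$ live in disjoint sub-polygons meeting only along $u_0u_r$. Once this structural separation is established, summing the contributions $2g(r)f(k-r)$ over $r\in\{1,\ldots,k\}$ yields the claimed inequality.
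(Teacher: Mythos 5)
Your high-level plan --- cut the path at an intermediate vertex $u_r$, count the initial segment by $g(r)$ and the tail by $f(k-r)$ --- has the right shape, but the step that makes the recurrence work is missing: you never establish that the cut vertex $u_r$ is one of at most \emph{two} vertices determined by the endpoints $u=u_0$ and $v=u_k$ alone. You attribute the factor $2$ to ``the two possible choices for $u_r$ as an outer-face neighbour of $u_0$ inside $G_1$'', but $G_1$ is defined in terms of the path being decomposed, so this is circular: every neighbour $w$ of $u_0$ is an outer-face neighbour of $u_0$ in the sub-polygon cut off by the edge $u_0w$, and nothing restricts $u_r$ to two candidates. Moreover, your selection rule degenerates: for $r=1$ the separation condition is vacuous on the empty set $\{u_1,\dots,u_{r-1}\}$, and in fact the edge $u_0u_1$ automatically has all of $u_2,\dots,u_k$ on one side (each edge $u_iu_{i+1}$ with $i\ge 2$ is a chord disjoint from $\{u_0,u_1\}$ and cannot cross $u_0u_1$ in the outerplanar embedding), so the ``smallest $r$'' is always $1$ and the resulting bound is only $f(k)\le \deg(u_0)\cdot g(1)\cdot f(k-1)$, which is not the claimed inequality. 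Finally, a \emph{virtual} chord $u_0u_r\notin E(G)$ is crossed by at least one edge of the triangulation, so $\{u_0,u_r\}$ is not a vertex cut and a path can move between the two sides without visiting $u_0$ or $u_r$; the clean separation you invoke only holds when cutting along an actual edge.

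The paper's proof supplies exactly the missing ingredient. For non-adjacent $u,v$ it defines $x$ and $y$ to be the neighbours of $u$ closest to $v$ along the outer face clockwise and counter-clockwise; these depend only on $u$ and $v$, not on the path. The actual edges $ux$ and $uy$ cut off two sub-polygons $G_1,G_2$ not containing $v$, and since $u$ has no neighbour strictly inside the remaining middle region, every $u$--$v$ path must begin inside $G_1$ or $G_2$ and can only exit through $x$ or $y$, each of which is consecutive with $u$ on the outer face of its sub-polygon. This pins the gateway down to two choices, giving $2\cdot g(r)$ for the first $r$ steps and $f(k-r)$ for the remainder; the adjacent case contributes the $r=k$ term via $f(k)\le 2g(k)$. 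To repair your argument you would need to replace the ``smallest separating chord'' rule by this choice of $x$ and $y$, or by some equivalent device that bounds the number of possible cut vertices by an absolute constant.
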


\begin{proof} We may suppose that the outerplanar graph is maximal. 
Let $u$ and $v$ be a fixed pair of vertices in an outerplanar graph $G$. We count the number of paths whose first vertex is $u$ and the $k+1$th vertex is $v$.

 \ \ \ {\em Case 1.} $u$ and $v$ are adjacent. \\ The edge $uv$ divides the graph $G$ into two outerplanar graphs $G_1$ and $G_2$ with a single common edge. Every path from $u$ to $v$ must be contained in one of $G_1$ and $G_2$ due to the outerplanar property of $G$. Hence $f(k)\leq 2g(k)$ in this case.
 
\ \ \  {\em Case 2.} $u$ and $v$ are not adjacent.\\
Let $x$ and $y$ the neighbours of $u$ which are the closest to $v$ on the outer face clockwise and counter-clockwise, respectively. The edge $ux$  divides $G$ to two outerplanar graphs. Let $G_1$ be the outerplanar graph obtained this way which does not contain $v$. Similarly, the  edge $uy$  divides $G$ to two outerplanar graphs. Let $G_2$ be the outerplanar graph obtained this way which does not contain $v$.
Then each $k$-path takes a few $1\leq r\leq k-1$ steps  either in $G_1$, eventually reaching $x$ and continuing to $v$, or  in $G_2$, eventually reaching $y$ and continuing to $v$. This yields the stated summation as an upper bound.
\end{proof}

\begin{lemma}
$g(r)\leq \cC({r-1})$. 
\end{lemma}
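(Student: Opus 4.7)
The plan is to prove the lemma by strong induction on $r$, exploiting the recursive decomposition of a maximal outerplanar graph along the triangle containing the edge $uv$. Since adding chords to an outerplanar graph can only increase the number of $u$-$v$ paths, I may assume the host graph $G$ is maximal outerplanar (a triangulated polygon with $uv$ an outer edge); the base case $r=1$ is immediate since the only such path is the edge $uv$ itself, and $\cC(0)=1$.

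The key structural step is the following decomposition. The outer edge $uv$ lies in a unique triangular face $uvw$ of $G$. Since an outerplanar graph contains no $K_{2,3}$-subdivision, $u$ and $v$ have $w$ as their only common neighbour; and by the non-crossing property of chords in a polygon triangulation, the rest of $G$ decomposes as two maximal outerplanar subgraphs $P_A$ and $P_D$ meeting only at $w$, with $u,w$ consecutive on the outer face of $P_A$ and $w,v$ consecutive on the outer face of $P_D$, and with no edge of $G$ connecting $V(P_A)\setminus\{w\}$ to $V(P_D)\setminus\{w\}$ other than $uv$ itself.

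Consequently, any $u$-$v$ path of length $r\ge 2$ avoids the edge $uv$ and must pass through the cut vertex $w$ exactly once, splitting canonically as a $u$-$w$ path in $P_A$ of length $r_1\ge 1$ concatenated with a $w$-$v$ path in $P_D$ of length $r_2=r-r_1\ge 1$. Applying the induction hypothesis to each half and summing over the split point,
\[
g(r)\ \le\ \sum_{r_1=1}^{r-1}\cC(r_1-1)\,\cC(r-r_1-1)\ =\ \sum_{j=0}^{r-2}\cC(j)\,\cC(r-2-j)\ =\ \cC(r-1),
\]
by the standard Catalan recurrence, which closes the induction.

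I expect the main obstacle to be justifying the structural decomposition cleanly: verifying that $w$ is genuinely the unique common neighbour of $u$ and $v$ and that the two halves $P_A,P_D$ share only $w$, so that every long $u$-$v$ path is really forced through $w$ with no shortcut across the ``other side''. Once this separation is in place the Catalan identity delivers the bound automatically, and the degenerate cases (where $uw$ or $vw$ is itself an outer face edge, making $P_A$ or $P_D$ a single edge) are absorbed into the base $r_1=1$ or $r_2=1$ term of the sum.
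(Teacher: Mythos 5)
Your proof is correct and follows essentially the same route as the paper: split every $u$--$v$ path of length $r\ge 2$ at the unique apex $w$ of the triangle on the outer edge $uv$, note that $uw$ and $wv$ are again edges between consecutive outer vertices of the two halves, and close the induction via the Catalan recurrence $\sum_{j}\cC(j)\cC(r-2-j)=\cC(r-1)$. One small caveat: the uniqueness of the common neighbour $w$ does not follow from $K_{2,3}$-freeness alone (the graph $K_4$ minus an edge is outerplanar and has two adjacent vertices with two common neighbours); it is the non-crossing of chords in the polygon triangulation, which you also invoke, that forces both the uniqueness of $w$ and the separation of the two halves.
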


\begin{proof}
For $r=1$ and $r=2$ the statement clearly holds with equality. For $r>2$ consider two consecutive vertices $u$ and $v$ on the outer face in a maximal outerplanar graph $G$. Since $G$ is triangulated, there is a unique  vertex $w$ in the common neighbourhood $N(u)\cap N(v)$.  Let $G_1$ be the outerplanar graph which is not containing $v$ that we obtain from $G$ by cutting it into two halves via the edge $uw$. Similarly, let $G_2$ be the outerplanar graph which is not containing $u$ that we obtain from $G$ by cutting it into two halves via the edge $vw$. Each path from $u$ to $v$  of length $r>1$ passes through $w$ and consists of an $s$-path in $G_1$  and an $(r-s)$-path in $G_2$ for some $1\leq s\leq r-1$. Since the edges $uw$ and $wv$ are formed by consecutive vertices on the outer face of $G_1$ and $G_2$, we obtain an upper bound $ g(r)\leq \sum_{s=1}^{r-1}  g(s)g(r-s) $. This recursion coincides with the recursion of the Catalan numbers, hence the proof.
\end{proof}

As a corollary, we have
$$ f(k)\leq \sum_{r=1}^k 2\cdot \cC({r-1})f(k-r).$$ 

--------

\begin{proof}[Proof of Theorem \ref{paths}, upper bound]

To  prove the  statement   we show by induction $k$  that $f(k)\leq  4^k$    holds. First observe that this clearly holds for $k=0, 1$ and $2$ as outerplanar graphs are $K_{2,3}$-free graphs. Then using the inductional hypothesis we get

$$ f(k)\leq  \sum_{r=1}^k 2\cdot \cC({r-1})4^{k-r}.$$

In other words, if we take the generating function of the Catalan numbers $F(z)=\sum_{t=0}^{\infty} \cC({t})z^t$, we obtain an upper bound as
$$ f(k)< 4^{k-1} \cdot 2\cdot F(\frac{1}{4}).$$
On the other hand, we know that $F(z)=\frac{ 1-\sqrt{1-4z}}{2z}$, hence we get $f(k)<  4^{k-1}\cdot2\cdot2$, which is the desired bound.
\end{proof}

\begin{proof}[Proof of Theorem  \ref{outertree}, alternate proof on the number of trees]
We start by showing that  $\exo(n, T)= O(n^{\ell(T)})$ holds. Let us take an arbitrary set of $\ell(T)$ vertices from the vertex set $V(G)$ of any outerplanar graph $G$ on $n$ vertices. There are at most $\ell(T)!$ ways to identify these with the labelled leaves of $T$. Pick one of these identifications. We prove that the number of ways the identification can be completed to an embedding of $T$ in $G$ is bounded from above by a  constant depending on $\ell(T)$. Take one of the identified vertices $w$. In the proof of Theorem \ref{paths} we showed that there are constant number of ways to embed a path of given length between two fixed vertices, thus there is also a constant number of ways to provide such a combined embedding between $w$ and the other identified vertices. Every embedding of $T$ with leaves fixed in  $V(G)$ will be among these, hence the claim.

  $\exo(n, T)= \Omega(n^{\ell(T)})$ follows as a corollary from the following observation.
  \begin{prop}\label{also} $\exo(n, H)= \Omega(n^{\ell(H)})$, provided that $H$ is outerplanar.\end{prop}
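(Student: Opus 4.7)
The plan is to construct an explicit outerplanar graph $G$ on $n$ vertices containing $\Omega(n^{\ell(H)})$ copies of $H$. I will assume that $H$ has at least one vertex of degree $\ge 2$; this is the non-degenerate case (if every vertex of $H$ is a leaf then $H$ is a disjoint union of isolated vertices and edges, and the stated bound needs to be interpreted separately). Write $L$ for the set of leaves of $H$, let $H' := H - L$ be the subgraph induced by the non-leaf vertices, and set $k := |V(H')| \ge 1$. For every $v \in V(H')$, let $d_v$ be the number of leaves of $H$ adjacent to $v$, so that $\sum_{v \in V(H')} d_v = \ell(H)$.

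The construction of $G$ is: fix an outerplanar embedding of $H'$ (which is outerplanar as a subgraph of the outerplanar graph $H$) and, at each $v \in V(H')$, attach $m := \lfloor (n-k)/k \rfloor$ new pendant vertices, drawn in the outer face incident to $v$. Add a few isolated vertices if necessary to obtain exactly $n$ vertices. Since attaching a pendant vertex at a vertex lying on the outer face preserves outerplanarity, the resulting graph $G$ is outerplanar. The only role of the outerplanarity hypothesis on $H$ is to ensure that $H'$ can be placed inside $G$ to begin with.

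To lower bound $\cN(G, H)$, I would count the injective maps $\varphi \colon V(H) \hookrightarrow V(G)$ that fix $V(H')$ pointwise and send the $d_v$ leaves of $H$ adjacent to $v \in V(H')$ to $d_v$ distinct pendant vertices attached to $v$ in $G$. Every such $\varphi$ preserves adjacency — the edges inside $H'$ because $\varphi$ is the identity on $V(H')$, and each leaf-edge because the image of any leaf is a pendant of the image of its unique neighbor. The number of these maps is
$$\prod_{v \in V(H')} m(m-1)\cdots(m-d_v+1) \; = \; \Omega\!\bigl(n^{\ell(H)}\bigr).$$
Different choices use different pendant images, so they correspond to distinct labeled embeddings. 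Since each unlabeled copy of $H$ in $G$ is overcounted by a factor of at most $|\mathrm{Aut}(H)|$, which depends only on $H$, we get $\exo(n,H) \ge \cN(G,H) = \Omega(n^{\ell(H)})$.

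The only genuine obstacle is verifying outerplanarity of $G$, which is handled for free by restricting the construction to pendant additions; everything else is essentially a counting exercise. The conceptual point is that all of the outerplanarity pressure is absorbed by the trunk $H'$, while the leaves of $H$ can be multiplied without leaving the outerplanar class.
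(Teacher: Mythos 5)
Your proof is correct and follows essentially the same route as the paper: the paper also blows up each pendant edge of $H$ into a star on $\Theta(n)$ vertices attached at the leaf's non-leaf neighbour, observes that the result is (completable to) an $n$-vertex outerplanar graph, and counts $\Omega(n^{\ell(H)})$ ways to choose the leaf images. Your version merely spells out details the paper leaves implicit (the injective-map count, the division by $|\mathrm{Aut}(H)|$, and the caveat for the degenerate case where $H$ has no vertex of degree at least $2$, for which the stated bound indeed fails, e.g.\ for $H=K_2$).
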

\begin{proof} Take a positive constant $c<\frac{1}{\ell(H)}$ and replace each pendant edge of $H$ by a star on $\lfloor{cn}\rfloor$ vertices. The obtained outerplanar  can be completed to an $n$-vertex outerplanar graph  and it contains  at least $\lfloor{cn}\rfloor^{\ell(H)}$ copies of $H$ as a subgraph. \end{proof}\end{proof}

\subsection{Number of paths, lower bound: numeral system graphs}

The lower bound relies on an outerplanar graph which we call {\em numeral system graph}.

\begin{defi}[Numeral system graph]

For integers $N>1$ and $t>0$, we construct the graph $G(N,t)$ as follows. 

Let the vertices of $G(N,t)$ be  integers from $0$ to $N^t-1$ written in base $N$.
Each number $R$  is connected to  $(r-1)N^s$ and $(r+1)N^s$ if $R$ can be expressed as 
$R=rN^s$  for some  integers $0\le s\le t-1$ and $r\in \mathbb{Z}$. In this context, $0$ should be treated as the equivalent of $N^t$.

Furthermore, connect  every number  to the one  we obtain by changing its last nonzero digit to zero. See Figure \ref{Fig:Numsys-graph}.
\end{defi}

\begin{figure}[h]
\centering
\includegraphics[width=8cm]{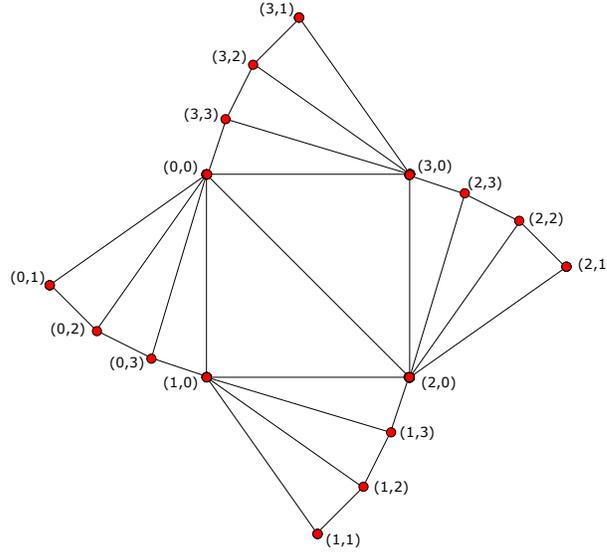}
\caption{The graph $G(4,2)$}
\label{Fig:Numsys-graph}
\end{figure}

It is easy to see by induction on $t$ that $G(N,t)$ is a triangulated outerplanar graph, moreover $G(N,1)$ is the same as $K_1 \vee P_{N-1}$.

We will prove that 
\begin{equation}\label{eq:5}
\lim_{k\rightarrow \infty} \lim_{N\rightarrow \infty}\sqrt[k]{ \frac{\cN(G(N,t), P_{k+1})}{\binom{|V(G(N,t))|}{2}}}\geq 4 \end{equation} for $t=\lfloor\sqrt{k}\rfloor$.

Indeed, this implies the lower bound in Theorem \ref{paths} since for each $n$ we can determine the largest $t$-th power $N^t$ which does not exceed $n$, and take a maximal outerplanar graph which  contains $G(N,t)$ as a subgraph. The distribution of the $t$-th powers imply that the lower bound $4$ on the limit still stands for arbitrary $n$.


The proof will be derived from a bijection between a subfamily of $k$-paths in $G(N,t)$ and a set of so-called  {\em permitted} sequences.

\begin{defi}
A sequence $[\gamma_1, \ldots, \gamma_{k-2t}] \in \mathbb{N}^{k-2t}$ is permitted (with parameters $k-2t$ and $t$) if (i) $\gamma_1=0$, (ii) $\gamma_{i+1}\leq \gamma_{i}+1$ $\forall i$, and (iii) $\gamma_{i}\leq t-2$ $\forall i$ hold.
\end{defi}

\begin{Lemma}\label{sequences1}
Suppose that  $m_0+m_1...+m_{t-2}=k-2t$ holds for a set of non-negative integers $m_i$ and assume that we are given a sequence 
 for all $0\le i\le t-3$  consisting of $m_i$ $i$-s  and $m_{i+1}$ $i+1$-s which starts with $i$.  
Then there exists a unique permitted sequence of parameter $k-2t$ and $t$ which contains all the given sequences as a subsequence.
\end{Lemma}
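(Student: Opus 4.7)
The plan is to prove the lemma by induction on $t$. The base cases are essentially trivial: for $t = 2$ the set of given sequences is empty and conditions (i)--(iii) force $\gamma \equiv 0$, while for $t = 3$ a permitted sequence has values in $\{0,1\}$ and coincides with its unique given projection.

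For the inductive step with $t \ge 4$, I would first apply the inductive hypothesis to the given sequences indexed by $0 \le i \le t-4$, regarded as input to the statement with $t-1$ in place of $t$ and total length $m_0 + \cdots + m_{t-3}$ in place of $k - 2t$. This yields a unique permitted sequence $\gamma'$ with values in $\{0, 1, \ldots, t-3\}$ containing these sequences as subsequences. The task then reduces to inserting $m_{t-2}$ copies of the value $t-2$ into $\gamma'$ so that the remaining given sequence, for the pair $(t-3, t-2)$, is also contained as a subsequence. The key structural observation is that in any permitted sequence every occurrence of $t-2$ is immediately preceded by either a $t-3$ or another $t-2$, since condition (ii) forces $\gamma_j = t-2 \Rightarrow \gamma_{j-1} \ge t-3$. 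Hence the $(t-2)$'s occur in contiguous runs, each opening immediately after an occurrence of $t-3$, and no two such runs can lie between the same pair of consecutive $(t-3)$'s, since any return to level $t-2$ after descending below $t-3$ would first require another $t-3$. Writing the given sequence for the pair $(t-3, t-2)$ uniquely as $[t-3, (t-2)^{a_1}, t-3, (t-2)^{a_2}, \ldots, t-3, (t-2)^{a_{m_{t-3}}}]$, the insertion is therefore forced: place $a_r$ copies of $t-2$ immediately after the $r$-th $t-3$ in $\gamma'$. A short verification at the boundary of each inserted run confirms that the resulting $\gamma$ satisfies (i)--(iii) and contains every given sequence as a subsequence.

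Uniqueness is then obtained by reversing the procedure: given any permitted $\gamma$ containing the prescribed sequences, delete all occurrences of $t-2$ to obtain $\tilde{\gamma}$. The structural observation together with monotonicity ensures that $\tilde{\gamma}$ is permitted with the smaller parameters and still contains the sequences indexed by $0 \le i \le t-4$, so the inductive hypothesis gives $\tilde{\gamma} = \gamma'$; the structural observation then pins the positions of the $(t-2)$'s in $\gamma$ down to coincide with the construction above. I expect the main delicate point to be the verification that deleting $(t-2)$'s preserves condition (ii), which comes down to the facts that any value immediately preceding a maximal $(t-2)$-run must equal $t-3$ and any value immediately following one must be at most $t-3$, so that deletion cannot create an illegal upward jump.
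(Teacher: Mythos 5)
Your proof is correct and follows essentially the same route as the paper's: induction on $t$, using the fact that condition (ii) forces every occurrence of the maximal value $t-2$ to sit in a contiguous run immediately following a $t-3$, so that the prescribed $(t-3,t-2)$-subsequence determines the insertion positions uniquely. Your writeup is in fact somewhat more careful than the paper's, notably in spelling out why deleting the $(t-2)$'s preserves the permitted property for the uniqueness direction.
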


\begin{proof}

Note first that a subsequence of the numbers smaller than $j>0$ in a permitted sequence also forms a permitted sequence (with parameters $\sum_{i=0}^{j-1} m_i$, $j-1$) since the numbers never increase by more than $1$ in one step.
We prove the Lemma by induction on $t$. It clearly holds for $t=2$ for all $k\geq 2t$. Next, by the inductional hypothesis, we know that 
there is a unique permitted sequence of the numbers smaller than $j>0$ which contains all the given sequences as a subsequence for $0\le i\le j-2$, and  the subsequence of the numbers equal to $j-1$ or $j$ is given, which starts with $j-1$.
The position of $j$-s are thus determined  via the given subsequence of $m_{j-1} $ $j-1$-s and $m_j$ $j$-s in order to obtain a permitted sequence
as each $j$ must be inserted immediately after a $j-1$  or a $j$ following the order of the subsequence. \\
The sequence obtained this way will remain permitted.
\end{proof}

\begin{Cor}\label{Cor1}
The number of permitted sequences of parameters $k-2t$ and $t$ in which the number $i$ appears $m_i>0$ times for all $0\le i\le t-2$ is $\prod_{i=0}^{t-3} \binom{m_i+m_{i+1}-1}{m_i-1}$.
\end{Cor}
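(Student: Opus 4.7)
The plan is to turn Lemma \ref{sequences1} into a bijective count. First I would associate to every permitted sequence with the prescribed multiplicities a tuple of ``adjacent-level'' subsequences: for each $0\le i\le t-3$, extract the subsequence consisting of those entries equal to $i$ or $i+1$, taken in the original order. This subsequence has length $m_i+m_{i+1}$. Because a permitted sequence starts at $\gamma_1=0$ and satisfies $\gamma_{j+1}\le\gamma_j+1$, the very first entry whose value is at least $i$ must equal exactly $i$; since $m_i>0$ guarantees that such an entry exists, the extracted subsequence begins with $i$ rather than $i+1$.

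Lemma \ref{sequences1} provides the reverse direction: any tuple of $t-2$ such adjacent-level subsequences (with the prescribed multiplicities, each beginning with $i$) is the image of a unique permitted sequence under this extraction. Hence extraction is a bijection between the permitted sequences we want to count and the tuples of adjacent-level subsequences indexed by $i\in\{0,\ldots,t-3\}$.

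To finish, I would count the tuples levelwise. For each fixed $i$, a sequence of $m_i$ copies of $i$ and $m_{i+1}$ copies of $i+1$ whose first entry is $i$ is determined by freely arranging the remaining $m_i-1$ copies of $i$ among the remaining $m_i+m_{i+1}-1$ positions, yielding $\binom{m_i+m_{i+1}-1}{m_i-1}$ possibilities. Since the choices for distinct values of $i$ are independent under the bijection, multiplying over $0\le i\le t-3$ produces the claimed $\prod_{i=0}^{t-3}\binom{m_i+m_{i+1}-1}{m_i-1}$.

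The only genuinely delicate point is verifying that the extracted subsequence starts with $i$ and not $i+1$, which is precisely what forces the factor $\binom{m_i+m_{i+1}-1}{m_i-1}$ to appear rather than the full $\binom{m_i+m_{i+1}}{m_i}$. Once this observation is secured, the corollary reduces to the uniqueness assertion of Lemma \ref{sequences1} combined with a one-line binomial count at each level, so I do not anticipate any further obstacle.
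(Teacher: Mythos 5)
Your argument is correct and is essentially the paper's own proof: both reduce the count, via the uniqueness statement of Lemma \ref{sequences1}, to independently choosing for each $0\le i\le t-3$ an adjacent-level subsequence of $m_i$ copies of $i$ and $m_{i+1}$ copies of $i+1$ beginning with $i$, giving the factor $\binom{m_i+m_{i+1}-1}{m_i-1}$. Your explicit justification that each extracted subsequence must start with $i$ (from $\gamma_1=0$ and $\gamma_{j+1}\le\gamma_j+1$) is a detail the paper leaves implicit, but the route is the same.
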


\begin{proof}
By  Lemma \ref{sequences1}, the number of such sequences equals  the product of the number of possible subsequences which consist of $i$-s of multiplicity $m_i$  and $i+1$-s of multiplicity $m_{i+1}$, 
and which start with $i$, for all $i$.
This yields $\binom{m_i+m_{i+1}-1}{m_i-1}$ for each $i<t-2$.
\end{proof}

\begin{prop}\label{sorozatszam}
Suppose that $t=\lfloor \sqrt{k}\rfloor\geq 4$ and denote by $\Gamma(k-2t, t)$ the number of permitted sequences of parameters $k-2t$ and $t$. Then we have $$\liminf_{k\to \infty} \sqrt[k]{\Gamma(k-2t, t)}\geq 4.$$
\end{prop}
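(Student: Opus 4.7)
My plan is to apply Corollary \ref{Cor1} with a careful, nearly uniform choice of the multiplicities $m_i$ and then estimate the resulting product of binomial coefficients using Stirling's formula.

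First, set $m := \lfloor (k-2t)/(t-1) \rfloor$ and distribute the $k-2t$ symbols among the $t-1$ values $0, 1, \ldots, t-2$ as evenly as possible, so that each $m_i \in \{m, m+1\}$. Since every $m_i$ is positive (for $k$ large enough, given $t = \lfloor \sqrt{k} \rfloor$), Corollary \ref{Cor1} gives the lower bound
\begin{equation*}
\Gamma(k-2t, t) \;\geq\; \prod_{i=0}^{t-3} \binom{m_i + m_{i+1} - 1}{m_i - 1}.
\end{equation*}
Each factor is essentially a central binomial coefficient $\binom{2m}{m}$ up to a bounded multiplicative perturbation, because $m_i$ and $m_{i+1}$ differ by at most $1$ and both lie in $\{m, m+1\}$.

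Next, I would invoke the standard Stirling estimate $\binom{2m}{m} = (1+o(1)) \cdot 4^m/\sqrt{\pi m}$ and deduce that, for some polynomial correction $P(m)$ uniform in the choice of the $m_i$, each factor is at least $4^{m}/P(m)$ with $P(m) = O(\sqrt{m})$. Multiplying $t-2$ such factors,
\begin{equation*}
\Gamma(k-2t, t) \;\geq\; \frac{4^{\, m(t-2)}}{P(m)^{\,t-2}}.
\end{equation*}
The key arithmetic observation is that $m(t-2) = (k-2t) \cdot \frac{t-2}{t-1} + O(t)$, and with $t = \lfloor \sqrt{k} \rfloor$ this simplifies to $m(t-2) = k - O(\sqrt{k})$. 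Hence
\begin{equation*}
\log \Gamma(k-2t, t) \;\geq\; \bigl(k - O(\sqrt{k})\bigr)\log 4 \;-\; (t-2)\log P(m).
\end{equation*}

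Finally, I would take $k$-th roots. Since $t = O(\sqrt k)$ and $\log P(m) = O(\log k)$, the second term on the right contributes only $O(\sqrt{k}\log k / k) = o(1)$ to $\frac{1}{k}\log \Gamma$, and the first term contributes $\log 4 - o(1)$. Thus $\sqrt[k]{\Gamma(k-2t,t)} \to 4$ from below, establishing the desired $\liminf \geq 4$.

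The only real obstacle is bookkeeping: one must verify that the remainder when $k-2t$ is divided by $t-1$ really does affect at most a few $m_i$'s by $1$, so that the uniform Stirling estimate $\binom{m_i+m_{i+1}-1}{m_i-1} \geq 4^m/P(m)$ applies to every factor; and one must check that $m_i \geq 1$ so that Corollary \ref{Cor1} is in force. Both are routine once $k$ is taken sufficiently large relative to $t = \lfloor\sqrt k\rfloor$, and no fine asymptotic in the Stirling correction is actually needed, because the polynomial loss $P(m)^{t-2}$ is absorbed by the $k$-th root.
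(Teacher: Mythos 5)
Your argument is correct and follows essentially the same route as the paper's proof: choose the multiplicities $m_i$ as evenly as possible, invoke Corollary \ref{Cor1}, bound each factor from below by a central-binomial-type estimate (the paper uses the elementary bound $\binom{2m_1-1}{m_1-1}>2^{2m_1-1}/(2m_1)$ where you use Stirling), and observe that the polynomial losses raised to the power $t-2=O(\sqrt{k})$ vanish under the $k$-th root. The only cosmetic difference is that the paper puts the division remainder entirely into $m_{t-2}$ rather than spreading it out, which changes nothing asymptotically.
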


\begin{proof}
Let $m_i=\lfloor \frac{k-2t}{t-2}\rfloor$ for all $0\leq i<t-2$ and $m_{t-2}= k-2t - (t-2)\lfloor \frac{k-2t}{t-2}\rfloor$. Then by Corollary \ref{Cor1} we have a lower bound on $\Gamma(k-2t, t)$ as $\Gamma(k-2t, t)>\prod_{i=0}^{t-3} \binom{m_i+m_{i+1}-1}{m_i-1}$. This in turn provides the bound 
$$\Gamma(k-2t, t)> \binom{2m_1-1}{m_1-1}^{t-2}>\frac{2^{(2m_1-1)(t-2)}}{(2m_1)^{t-2}}> \frac{2^{2k-5t}}{(2\sqrt{k})^{\sqrt{k}}},$$ from which the statement follows.
\end{proof}

\begin{proof}[Proof of the lower bound on $\exo(n, P_k)$ via inequality (\ref{eq:5})]
$ \ $ \newline
Let us choose an arbitrary pair of vertices $A=(\alpha_1, \ldots, \alpha_t)_N, B=(\beta_1, \ldots \beta_t)_N \in V(G(N,t))$ for which $\alpha_i, \beta_i\geq k \ \forall i$ and $\alpha_1\neq \beta_1$. We define an injective function between the set of permitted sequences of parameters $k-2t, t$ and the set of paths starting from $A$ and ending in $B$.

Consider a permitted sequence $[\gamma_1, \ldots, \gamma_{k-2t}]$, and let us denote the vertices of the path $A_0:=A, A_1, A_2, \ldots, A_k:=B$ defined iteratively according to $[\gamma_1, \ldots, \gamma_{k-2t}]$ as follows. \\
If $A_j$  is already defined for some $0\leq j<k-2t$ and $A_j$ ends in exactly $q$ zeros in base $N$ then 
\begin{itemize}
    \item obtain $A_{j+1}$ from $A_j$ by turning its last nonzero digit to zero if $\gamma_{j+1}=q+1$,
    \item $A_{j+1}:= A_j-N^s$ if $\gamma_{j+1}=s\leq q$.
\end{itemize}

Every digit of $A$ is at least $k$, and in every step we turn a digit to zero, or we decrease a digit by one and turn all the following digits  from $0$ to $N-1$. Since the second parameter of the sequence is $t$, the process will not affect the first digit in the first $k-2t$ steps.
Observe that $A_{j+1}$ will end in exactly $\gamma_{j+1}$ zeros. Indeed,    the last digit of $A_0$ is not zero, and  the property of having ending zeros as many as the corresponding element of the sequence is maintained during the process. 
 This implies that $A_0, \ldots A_{k-2t}$ indeed determines a path, and different sequences obviously determine different paths of length $k-2t$.

The final step to confirm the injection is to show that each such $(k-2t)$-path corresponding to a permitted sequence can be completed to a $k$-path ending in $B$.\\ From $A_{k-2t}$ we can reach the vertex $(0, 0, \ldots, 0, 0)_N$ in  $t'\leq t$ steps by turning each nonzero digit to zero from right to left. The next $t-1$ steps  correspond to fill in the digits of $B$ except for the last one, i.e. $(0, 0, \ldots, 0, 0)_N -(\beta_1, 0, \ldots, 0, 0)_N-(\beta_1, \beta_2, \ldots, 0, 0)_N -\ldots -(\beta_1, \beta_2, \ldots, \beta_{t-1}, 0)_N $. Finally we use $t'+1$ steps to reach $B$ by stepping to $(\beta_1, \beta_2, \ldots, \beta_{t-1}, \beta_t-t')_N $ and increasing the last digit in $t'$ steps by one.

From each ordered pair of vertices $A=(\alpha_1, \ldots, \alpha_t)_N, B=(\beta_1, \ldots \beta_t)_N \in V(G(N,t))$ for which $\alpha_i, \beta_i\geq k \ \forall i$ and $\alpha_1\neq \beta_1$, we obtained at least 
$\Gamma(k-2t, t)$ $k$-paths. Moreover the position of $(0,0, \ldots, 0)_N$ in the paths implies  these are different for each ordered pair. Since the number of such ordered pairs is $(N-k)^{2t-1}(N-k-1)$ and $\lim_{k\to \infty}\lim_{N\to \infty} \sqrt[k]{\frac{(N-k)^{2t-1}(N-k-1)}{\binom{N^t}{2}}}=1$, the proof follows from Proposition \ref{sorozatszam}. 
\end{proof}

\subsection{Number of paths: the case of small length}

In this subsection we present some partial results concerning the value of
$\exo(n, P_k)$ rather than its behaviour when both $n$ and $k$ are large, described in Theorem \ref{paths}.

\begin{theorem} Suppose that $n\geq 3.$ Then
$$ \exo(n, P_3)=\frac{n^2+3n-12}{2},$$
and the unique extremal graph  is $G(n, 1)= K_1 \vee P_{n-1}$.
\end{theorem}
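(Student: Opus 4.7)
The plan is to proceed by induction on $n$, after a standard reduction to maximal outerplanar graphs (MOPs). Adding any edge that preserves outerplanarity (for instance, one triangulating the outer face) strictly increases the degrees of its two endpoints, and therefore strictly increases $\sum_v \binom{d(v)}{2}$, so we may assume $G$ is a MOP; then $|E(G)|=2n-3$ and $\sum_v d(v)=4n-6$. Each copy of $P_3$ is uniquely determined by its midpoint and the unordered pair of incident edges, whence
\[
\cN(G,P_3)=\sum_{v\in V(G)}\binom{d(v)}{2}.
\]
A direct computation gives $\cN(K_1\vee P_{n-1},P_3)=\binom{n-1}{2}+(n-3)\binom{3}{2}+2\binom{2}{2}=\frac{n^2+3n-12}{2}$, which is the claimed target value, so the work lies in proving it is an upper bound.

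The key structural lemma I would use is: for every edge $xy$ of an outerplanar graph on $n$ vertices, $d(x)+d(y)\le n+2$. This follows from the Chartrand--Harary characterisation, which forbids $K_{2,3}$ as a subgraph and therefore forces $|N(x)\cap N(y)|\le 2$ for every pair of vertices; combined with the trivial bound $|N(x)\cup N(y)|\le n$, one gets $d(x)+d(y)=|N(x)\cup N(y)|+|N(x)\cap N(y)|\le n+2$. With this lemma, the induction runs cleanly. The base case $n=3$ is $G=K_3$ with $\cN(K_3,P_3)=3=(9+9-12)/2$. For $n\ge 4$, the weak dual of a MOP is a tree with at least two leaves, so $G$ has an ear $v$ whose two neighbours $x,y$ span a triangle $vxy$ in $G$. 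The $P_3$s that contain $v$ number $(d(x)-1)+(d(y)-1)+1$ (those with $v$ as endpoint plus the single path $x\text{-}v\text{-}y$), and the $P_3$s not containing $v$ are exactly those of $G-v$, so
\[
\cN(G,P_3)=\cN(G-v,P_3)+d(x)+d(y)-1.
\]
Combining the inductive hypothesis with the key lemma yields
\[
\cN(G,P_3)\le\frac{(n-1)^2+3(n-1)-12}{2}+(n+1)=\frac{n^2+3n-12}{2},
\]
closing the induction on the bound.

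For the uniqueness claim, equality throughout the induction simultaneously forces $G-v$ to be extremal on $n-1$ vertices (hence $G-v=K_1\vee P_{n-2}$ by the inductive hypothesis) and forces $|N(x)\cap N(y)|=2$ and $|N(x)\cup N(y)|=n$; equivalently, $\{x,y\}$ must dominate every vertex of $G$. A short case analysis of the outer edges of the sub-fan $K_1\vee P_{n-2}$ to which one may attach the ear $v$ shows that the only attachment meeting the domination condition makes $v$ adjacent to the central vertex of the sub-fan, which reproduces $K_1\vee P_{n-1}$. I expect the uniqueness step to be the main technical obstacle: the degree-sum bound $d(x)+d(y)\le n+2$ can be saturated at MOPs other than the fan in low orders, so propagating uniqueness through the induction requires ruling out alternative ear attachments carefully at each stage.
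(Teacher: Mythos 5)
Your computation of the extremal value and your upper-bound induction follow essentially the same route as the paper: reduce to a maximal outerplanar graph, count $P_3$'s via $\sum_v\binom{d(v)}{2}$, remove an ear $v$ whose neighbours $x,y$ span a triangle, use $\cN(G,P_3)=\cN(G-v,P_3)+d_G(x)+d_G(y)-1$, and bound the degree sum. The only (cosmetic) difference is the source of that bound: you prove $d_G(x)+d_G(y)\le n+2$ for an arbitrary edge from $K_{2,3}$-freeness via $|N(x)\cap N(y)|\le 2$, while the paper uses that an outer edge of a triangulated $(n-1)$-vertex graph has exactly one common neighbour, giving $\deg_{G-v}(x)+\deg_{G-v}(y)\le n$; both yield the same additive term $n+1$, and this part of your argument is correct.

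The genuine gap is in the uniqueness step, and you correctly sensed where it lies, but your proposed resolution does not work. The claim that ``the only attachment meeting the domination condition makes $v$ adjacent to the central vertex of the sub-fan'' fails at $n=6$: in $G'=K_1\vee P_4$ with centre $c$ and path $p_1p_2p_3p_4$, the outer edge $p_2p_3$ satisfies $\deg_{G'}(p_2)+\deg_{G'}(p_3)=6$ and $N(p_2)\cup N(p_3)=V(G')$, so attaching the ear there also attains equality throughout your induction. The resulting graph --- a hexagon $1\cdots6$ with chords $24$, $46$, $62$ --- is a maximal outerplanar graph with degree sequence $(4,4,4,2,2,2)$ and $3\binom{4}{2}+3\binom{2}{2}=21=\frac{36+18-12}{2}$ copies of $P_3$, the same as $K_1\vee P_5$, yet it is not isomorphic to $K_1\vee P_5$. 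So the uniqueness assertion is actually false at $n=6$ and no case analysis can rescue it there; moreover at the $n=7$ step your induction would have to consider ears attached to this second extremal $6$-vertex graph as well, not only to the fan. (The paper's own proof has the identical defect: its sentence ``this is only possible if one of them has degree $n-2$ in $G'$'' is precisely the step that breaks for $n=6$.) For all other $n$ the uniqueness does go through by your scheme, since for $|V(G')|\ge 7$ a non-central outer edge of the fan dominates at most five vertices, and one checks directly that the exceptional $6$-vertex graph admits no equality-attaining ear attachment; but the statement as given needs $n=6$ excluded.
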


\begin{proof}

First we calculate the number of $2$-paths in  $K_1 \vee P_{n-1}$.

For any graph $G$ this number is determined by the degree distribution  as $\sum_{v\in V(G)} \binom{\deg_G(v)}{2}$. Since the degree distribution is $n-1, 2, 2, 3, \ldots, 3$ in $K_1 \vee P_{n-1}$, we have $\cN( K_1 \vee P_{n-1},P_2)=\binom{n-1}{2}+(n-3)\binom{3}{2}+2\binom{2}{2}=\frac{n^2+3n-12}{2}$.

We prove the statement by induction on the number of vertices. Without loss of generality we may suppose that the extremal graph $G$ is a maximal outerplanar graph (MOP for brief), since every outerplanar graph can be completed to a MOP and there is at least one $2$-path on every  edge  in a MOP. For $n=3$, the triangle is the only only MOP, so here the statement holds. 

To perform the inductional step, assume that the claim holds for outerplanar graphs on $n-1$ vertices and consider a MOP  $G$ on $n$ vertices. The weak dual of $G$ is a tree, which has at least one leaf, so there is a triangular face $ABC$  in which both $AC$ and $BC$ lie on the outer face.

Remove the vertex $C$ and denote the remaining graph $G'$. It is still a MOP and the edge $AB$ is  on its outer face. 
A $2$-path in $G$ is either in $G'$ or it is composed of either $AC$ or $BC$ attached to an edge incident to this vertex in $G'$, or it is the path $ACB$ itself.

Thus $\cN(G,P_3)=\cN(G',P_3)+\deg_{G'}(A)+\deg_{G'}(B)+1$.

Since $G'$ is a triangulated outerplanar graph of $n-1$ vertices, and $AB$ is an edge on the outer face, the vertices $A$ and $B$ share exactly one common neighbour, that is,  
$\deg{G'}(A)+\deg{G'}(B)\le n$.

By the induction hypothesis, we have $\cN(G',P_3)\le \frac{(n-1)^2+3(n-1)-12}{2}=\frac{n^2+n-14}{2}$, hence we obtain $\cN(G,P_3)\le \frac{n^2+n-14}{2}+n+1=\frac{n^2+3n-12}{2}$. 

Equality  holds only if $G'$ is the unique extremal graph  $K_1 \vee P_{n-2}$ and every vertex is connected to either $A$ or $B$ in $G'= K_1 \vee P_{n-1}$. This is only possible if one of them has degree $n-2$ in $G'$, which means that $G=K_1 \vee P_{n-1}$, as claimed.
\end{proof}

For $P_4$, it is not true anymore that $K_1 \vee P_{n-1}$ is the extremal graph for each $n$.
Consider the case $n=6$. The maximal outerplanar graph in which the $2$nd, $4$th and $6$th vertex are connected to each outer has $33$ different $3$-paths, while $K_1 \vee P_5$ has only $32$.\\
This may seem a sporadic  counterexample   which coincides with the only counterexample for  the Cvetkovi\'c-Rowlinson conjecture \cite{BoNing}. However we show that this is not the case. 
In the following propositions we point out that $G(n,1)$ is not extremal apart from cases when $n$ and $k$ are small.

\begin{prop}\label{forgo_ut} Suppose that $n>k\geq 3$. Then
$$\cN(G(n,1), P_{k+1})=4(k-2)\binom{n+1-k}{2}+3(n-k)-1.$$ 
\end{prop}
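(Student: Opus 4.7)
The graph $G(n,1) = K_1 \vee P_{n-1}$ is a fan: a center $c$ joined to every vertex of an outer path $v_1 v_2 \cdots v_{n-1}$. My plan is to partition the $P_{k+1}$-subgraphs according to whether they use $c$. Paths avoiding $c$ are contiguous runs of $k+1$ vertices on the outer path, contributing $n-1-k$ to the count, so no further work is needed there.

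For paths through $c$, I would count \emph{ordered} paths $u_0 u_1 \cdots u_k$ (each unordered $P_{k+1}$ is counted exactly twice since $k \geq 3$) and classify by the unique position $j \in \{0, 1, \ldots, k\}$ with $u_j = c$. Since every $v_i$ is adjacent to $c$, no adjacency constraint at $c$ remains, so the two ``arms'' $(u_0, \ldots, u_{j-1})$ and $(u_{j+1}, \ldots, u_k)$ may be any pair of vertex-disjoint oriented contiguous subpaths of $P_{n-1}$ of sizes $j$ and $k-j$.

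The crucial ingredient is an elementary but $j$-independent count: for $1 \le j \le k-1$, the number of ordered pairs of disjoint contiguous intervals of sizes $j$ and $k-j$ in $\{1, \ldots, n-1\}$ equals $2\binom{n-k+1}{2}$. I would prove this by splitting into the two cases according to which interval lies further left and, after the obvious shift of indices, summing $\sum_{m=1}^{n-k} m = \binom{n-k+1}{2}$ in each case. Multiplying by the orientation factors (a factor $2$ per arm of size $\ge 2$ and $1$ per singleton arm), the positions $j = 1$ and $j = k-1$ each contribute $4\binom{n-k+1}{2}$, while each interior $j \in \{2, \ldots, k-2\}$ contributes $8\binom{n-k+1}{2}$, for a total of $8(k-2)\binom{n-k+1}{2}$. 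The boundary positions $j \in \{0, k\}$, where $c$ is an endpoint, each contribute $2(n-k)$ (oriented $k$-subpaths of $P_{n-1}$).

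Summing all ordered counts, including the $2(n-k-1)$ ordered paths avoiding $c$, and dividing by $2$, yields exactly $4(k-2)\binom{n+1-k}{2} + 3(n-k) - 1$. The main obstacle is the bookkeeping of orientation factors near the ends of the $j$-range: singleton arms carry a factor $1$ rather than $2$, which momentarily threatens to ruin the closed form, but the deficit at $j = 1$ and $j = k-1$ combines cleanly with the bulk pattern to produce the coefficient $8(k-2)$ and a tidy final formula with no special cases.
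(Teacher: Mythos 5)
Your proposal is correct and follows essentially the same route as the paper's proof: both decompose the copies of $P_{k+1}$ according to the position of the hub vertex $c=v_0$ (absent, an endpoint, or internal), reduce the internal case to counting pairs of disjoint contiguous intervals in $\{1,\dots,n-1\}$ (giving $\binom{n-k+1}{2}$ per left--right arrangement) with an orientation factor of $2$ for each non-singleton arm and $1$ for each singleton arm, exactly as in the paper's distinction between $1<t<k-1$ and $t\in\{1,k-1\}$. The only cosmetic difference is that you count ordered paths and divide by $2$ at the end, whereas the paper counts unordered paths directly by fixing which interval precedes the other.
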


\begin{proof}
Suppose first that the vertex $v_0$ with degree $n-1$ is not contained in the path. Then $P_{k+1}$ must be a subpath of $P_{n-1}$, hence we gain $n-k-1$  copies. Next suppose that $v_0$ is an endvertex of the path $P_{k+1}$. Then the indices of the rest of the vertices form an interval of $[1,n-1]$ and appear in increasing or decreasing order, thus we get $2(n-k)$ copies. Finally if $v_0$ is a vertex of $P_{k+1}$ but not an endvertex, then $P_{k+1}$ consists of an interval $I$ of $t\geq 1$ integers, $v_0$ and an interval $J$ of $k-t\geq 1$ integers, such that $I\cap J= \emptyset$ and the elements of $I$ precede the ones of $J$. In both interval, the elements can appear in increasing and decreasing order, independently. For each $1<t<k-1$, this provides $4\binom{n+1-k}{2}$ different path while for $t\in \{1, k-1\}$ we get only $2\binom{n+1-k}{2}$ as increasing and decreasing order does not differ for sequences of length one. This in turn completes the proof.
\end{proof}

\begin{prop} 
For $n/7>k>5$, $\exo(n, P_k)$ exceeds the number of $k$-paths in  $G(N,1)=K_1 \vee P_{n-1}$.
\end{prop}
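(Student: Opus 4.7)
The plan is to exhibit a maximal outerplanar graph $G^{\ast}$ on $n$ vertices with $\cN(G^{\ast},P_k)>\cN(G(n,1),P_k)$, so that $\exo(n,P_k)\ge \cN(G^{\ast},P_k)>\cN(G(n,1),P_k)$. As a first step, I would substitute $k-1$ for $k$ in Proposition \ref{forgo_ut} to get the explicit benchmark
\[
\cN(G(n,1),P_k)=4(k-3)\binom{n-k+2}{2}+3(n-k+1)-1 = 2(k-3)n^{2}+O(n).
\]

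For the construction, note that the weak dual of $G(n,1)$ is a path, so I would choose $G^{\ast}$ to be a MOP whose weak dual has branching, to take advantage of the extra interior degrees. The natural candidate, inspired by the $n=6$, $k=4$ counter-example, is a ``hexagonal chain'': place $m=\lfloor n/7\rfloor$ hexagonal blocks, each triangulated by the chords $v_{2}v_{4},v_{4}v_{6},v_{2}v_{6}$, around the outer face, and glue consecutive blocks along a single shared outer edge. The hypothesis $n/7>k$ ensures $m>k$, so each $P_k$-copy of $G^{\ast}$ visits only a bounded number of blocks.

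I would then classify the $P_k$-copies of $G^{\ast}$ by which blocks they meet, and compare block-by-block to $G(n,1)$. Paths confined to one block replicate the $+1$ gain observed at $n=6$, summing to $\Omega(n)$ across blocks; paths crossing blocks are routed through the dual-tree structure and matched against paths through the hub $v_{0}$ of $G(n,1)$.

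The main obstacle is that the intra-block gain alone is only $\Theta(n)$, which is dwarfed by the $\Theta(n^{2})$ benchmark. To overcome this I would need to exhibit a quadratic surplus from inter-block paths; a plausible source is that in $G^{\ast}$ many pairs of vertices lying in distinct blocks are joined by paths with several free interior choices, whereas in $G(n,1)$ every such pair must be routed through a single hub. If the hexagonal chain proves insufficient, a fallback construction is the ``spider'' MOP obtained by gluing three fans $G(\lceil n/3\rceil,1)$ at a common triangle; its three hubs each support path families analogous to those of $G(n,1)$, allowing a direct and quantitative comparison of the three families and their pairwise intersections, which should yield the required strict inequality for all $k>5$ and $n>7k$.
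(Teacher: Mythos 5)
Your primary construction, the hexagonal chain, cannot work. In a chain of constant-size blocks glued along single outer edges, any path on $k$ vertices spans only $O(k)$ consecutive blocks, so only $O(nk)$ ordered pairs of vertices are joined by \emph{any} copy of $P_k$ at all; combined with the upper bound $f(k-1)\le 4^{k-1}$ on the number of $k$-vertex paths between a fixed pair in an outerplanar graph, the total count in the hexagonal chain is $O(nk4^{k})$, i.e.\ linear in $n$. The benchmark $4(k-3)\binom{n-k+2}{2}+O(n)=2(k-3)n^{2}+O(n)$ is quadratic in $n$, and $n$ is not bounded above in terms of $k$, so the hexagonal chain is strictly \emph{worse} than $G(n,1)$ for large $n$. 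The ``quadratic surplus from inter-block paths'' you hope for does not exist in that graph: the whole point of $G(n,1)$ is that its hub makes $\Theta(n^2)$ pairs mutually reachable by short paths, and any competitor must retain that property.

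Your fallback is the paper's actual construction: glue three copies of the fan $G(n/3+1,1)$ so that the three hubs $v_0,v_0',v_0''$ form a triangular face. But you stop at naming it; the substance of the proof is the quantitative comparison you defer. The key count is that for each of the roughly $3\cdot(n/3-2k+8)^{2}$ admissible cross-fan pairs $(A,B)$, the route $A-v_0-v_0'-B$ offers three independent gaps into which the remaining $k-4$ path vertices can be inserted as runs of consecutive fan-vertices, giving at least $4(k-4)^{2}+3$ distinct $k$-vertex paths per pair --- a gain \emph{quadratic in $k$} per pair, versus the linear-in-$k$ coefficient $4(k-3)$ in $G(n,1)$. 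One then checks that $\bigl(4(k-4)^{2}+3\bigr)\cdot 3\cdot(n/3-2k+8)^{2}$ exceeds the benchmark for $n>7k$ and $k\ge 6$ (the case $k=6$ is tight: $19$ versus $18$ in the leading coefficients after normalizing by $n^{2}/2$). Without this count, and without verifying that the inequality survives the restriction to admissible pairs and the constant $n>7k$, the argument is not a proof.
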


\begin{proof}
Take the graph $G(n/3+1, 1)$ and consider two further copies of this graph $G'(n/3+1, 1)$ and $G''(n/3+1, 1)$. Identify the vertex pairs $\{v_0, v''_{n/3}\}$, $\{v'_0, v_{n/3}\}$, $\{v''_0, v'_{n/3}\}$ to obtain a triangulated outerplanar graph $G$ having $v_0v_0'v_0''$ as one of its triangular faces.
We estimate the value of $\cN(G, P_{k})$ as follows.\\
Suppose that $A$ and $B$ are vertices from different copies. We want to estimate the number of paths of length $k-1$ between $A$ and $B$. Without loss of generality may assume that $A\in G(n/3+1, 1)$ and $B\in G'(n/3+1, 1)$.\\
Consider those paths starting from $A$ and ending on $B$ which contains $v_0$ and $v'_0$ in the order $A-v_0-v'_0-B$ with  $A\in \{v_i : k-4<i<n/3-k+4\}$, $B\in \{v'_i : k-4<i<n/3-k+4\}$ ({\em admissible} endvertex pairs). Then there is exactly $2$ ways to insert either $1$ or more vertices between  $A-v_0$ or $v'_0-B$ while  there is exactly $2$ resp. $3$ ways to insert either $1$ resp. more vertices between  $v_0-v'_0$. This adds up to $2+3+2$ ways to obtain a path on $k$ vertices from $A$ to $B$ if the inserted vertices are between one consecutive pair of  $A-v_0-v'_0-B$. There are at least $\binom{3}{2}\cdot 2\cdot 2\cdot\binom{k-5}{1}$ ways to insert $k-4$ vertices  between  exactly two consecutive pair of the three consecutive pairs in  $A-v_0-v'_0-B$. Finally, there are at least  $\binom{3}{3}\cdot 2\cdot 2\cdot 2\cdot\binom{k-5}{2}$ ways to insert $k-4$ vertices  between  exactly three consecutive pair of the three consecutive pairs in  $A-v_0-v'_0-B$.\\
Hence in total, we get at least $4(k-4)^2+3$ paths of length $k>4$ between $A\in \{v_i : k-4<i<n/3-k+4\}$ and $B\in \{v'_i : k-4<i<n/3-k+4\}$. The number of admissible choices for the pair $A\in G(n/3+1, 1), B \in G'(n/3+1, 1)$ is at $(n/3-2k+8)^2$, hence taking into consideration the other two options we finally get at least $$(4(k-4)^2+3)\cdot 3\cdot (n/3-2k+8)^2$$ path subgraphs $P_k$ in $G$. If $n>7k$, it is easy to check that the latter expression exceeds to corresponding number of $P_{k}$ from Proposition \ref{forgo_ut}.
\end{proof}

It remains an open question the describe the extremal structures for paths.


\begin{thebibliography}{00}
\small
\parskip -1mm
 
 
 \bibitem{AlonS} Alon, N.,  Shikhelman, C. (2016). Many T copies in H-free graphs. Journal of Combinatorial Theory, Series B, 121, 146--172.
 
 
 \bibitem{AD-O}
Andriantiana, E. O. D., Dossou-Olory, A. A. V. (2020). Nordhaus-Gaddum inequalities for the number of connected induced subgraphs in graphs. arXiv preprint arXiv:2006.01187.
 
 \bibitem{k-tree}
Andriantiana, E. O. D., Wagner, S.,  Wang, H. (2013). Greedy trees, subtrees and antichains. the electronic journal of combinatorics, 20(3), P28.

\bibitem{extremaltrees}
Andriantiana, E. O., Misanantenaina, V. R.,  Wagner, S. (2020). Extremal trees with fixed degree sequence. arXiv preprint arXiv:2008.00722.

\bibitem{Char} Chartrand, G.,  Harary, F. (1967). Planar permutation graphs. In Annales de l'IHP Probabilités et statistiques (Vol. 3, No. 4, pp. 433--438).

\bibitem{Martin}
 Cox, C.,   R. Martin,  R. R. (2021) Counting paths, cycles and blow-ups in planar graphs,  Arxiv preprint 2101.05911.pdf

\bibitem{Olory} Dossou-Olory, A. A. (2018). Graphs and unicyclic graphs with extremal number of connected induced subgraphs. arXiv preprint arXiv:1812.02422.

 \bibitem{Eppstein}
Eppstein, D. (1993). Connectivity, graph minors, and subgraph multiplicity. Journal of Graph Theory, 17(3), 409--416.

 \bibitem{erdos}  Erd\H os, P. (1962). On the number of complete subgraphs contained in certain graphs. Magyar Tud. Akad. Mat. Kut. Int. K\"ozl.  459--474.
 
 \bibitem{P3}
 Győri, E., Paulos, A., Salia, N., Tompkins, C.,  Zamora, O. (2019). The Maximum Number of Paths of Length Three in a Planar Graph. arXiv preprint arXiv:1909.13539.
 
 

 
 \bibitem{Gyori}
Győri, E., Paulos, A., Salia, N., Tompkins, C.,  Zamora, O. (2020). Generalized Planar Tur\'an Numbers. arXiv preprint arXiv:2002.04579. 
 
 \bibitem{c4}
 Gy\H{o}ri, E.,   Paulos, A.,
 Zamora, O. The Minimum Number of $4$-Cycles in a Maximal Planar Graph with Small  Number of Vertices (2020), arXiv reprint arXiv:2005.12100. 
 

 \bibitem{c5} Győri, E., Paulos, A., Salia, N., Tompkins, C.,  Zamora, O. (2019). The maximum number of pentagons in a planar graph. arXiv preprint arXiv:1909.13532. 

 \bibitem{Hakimi} Hakimi,  S., Schmeichel E.F. On the Number of Cycles of Length $k$ in a Maximal Planar Graph. J. Graph Theory 3 (1979), 69--86. 
 
 \bibitem{Colin}
Van der Holst, H., Lovász, L.,  Schrijver, A. (1999). The Colin de Verdiere graph parameter. Graph Theory and Computational Biology (Balatonlelle, 1996), 29--85.

\bibitem{Huynh}  Huynh, T., Joret, G., Wood, D. R. (2020). Subgraph densities in a surface. arXiv preprint arXiv:2003.13777.

\bibitem{Huynh2} Huynh, T.,  Wood, D. R. (2020). Tree densities in sparse graph classes. arXiv preprint arXiv:2009.12989.

\bibitem{BoNing} Lin, H.,   Ning, B.
 A Complete Solution to the Cvetkovic-Rowlinson Conjecture (2021)
 \url{https://www.researchgate.net/publication/338253624_A_Complete_Solution_to_the_Cvetkovic-Rowlinson_Conjecture}

\bibitem{Kirk} Kirk, R.,  Wang, H. (2008). Largest number of subtrees of trees with a given maximum degree. SIAM Journal on Discrete Mathematics, 22(3), 985--995.

\bibitem{Nagy} Nagy, Z. L. (2018). Coupon-Coloring and total domination in Hamiltonian planar triangulations. Graphs and Combinatorics, 34(6), 1385--1394.

\bibitem{OEIS} On-line encyclopedia of integer sequences, http://oeis.org/A000245.

\bibitem{Szekely}
Székely, L.A., Wang, H.(2005). On subtrees of trees, Adv. Appl. Math. 34, 138--155.

\bibitem{Szekely2}
Székely, L. A.,  Wang, H. (2007). Binary trees with the largest number of subtrees. Discrete applied mathematics, 155(3), 374--385.

\bibitem{Szekely3} Székely, L. A.,  Wang, H. (2014). Extremal values of ratios: distance problems vs. subtree problems in trees II. Discrete Mathematics, 322, 36--47.

\bibitem{Tait} Tait, M. (2019). The Colin de Verdiere parameter, excluded minors, and the spectral radius. Journal of Combinatorial Theory, Series A, 166, 42--58.

 
\bibitem{Fisk} Urrutia, J. (2000). Art gallery and illumination problems. In Handbook of computational geometry (pp. 973--1027). North-Holland.

\bibitem{Wormald}
Wormald, N. C. (1986). On the frequency of 3-connected subgraphs of planar graphs. Bulletin of the Australian Mathematical Society, 34(2), 309--317.


\bibitem{Yan}  Yan W.G.,  Yeh, Y.-N. (2006). Enumeration of subtrees of trees, Theoret. Comput. Sci. 369, pp. 256--268.

\end{thebibliography}
\end{document}